\newtheorem{theorem}{Theorem}
\newtheorem{lemma}{Lemma}
\newtheorem{corollary}{Corollary}
\newtheorem{condition}{Condition}
\title{On a reduction for a class of resource allocation problems}
\author{Martijn H. H. Schoot Uiterkamp, Marco E. T. Gerards, Johann L. Hurink \\ University of Twente, Enschede, the Netherlands}
\begin{document}
\maketitle
%\begin{multicols}{2}
%\linenumbers

\begin{abstract}
In the resource allocation problem (RAP), the goal is to divide a given amount of resource over a set of activities while minimizing the cost of this allocation and possibly satisfying constraints on allocations to subsets of the activities. Most solution approaches for the RAP and its extensions allow each activity to have its own cost function. However, in many applications, often the structure of the objective function is the same for each activity and the difference between the cost functions lies in different parameter choices such as, e.g., the multiplicative factors. In this article, we introduce a new class of objective functions that captures the majority of the objectives occurring in studied applications. These objectives are characterized by a shared structure of the cost function depending on two input parameters. We show that, given the two input parameters, there exists a solution to the RAP that is optimal for any choice of the shared structure. As a consequence, this problem reduces to the quadratic RAP, making available the vast amount of solution approaches and algorithms for the latter problem. We show the impact of our reduction result on several applications and, in particular, we improve the best known worst-case complexity bound of two important problems in vessel routing and processor scheduling from $O(n^2)$ to $O(n \log n)$.
\end{abstract}

\section{Introduction}
\label{sec_intro}

The resource allocation problem (RAP) is a classical problem within operations research and has been studied extensively and continuously since the 1950s \cite{Patriksson2008}. In its most basic and most studied form, this problem asks for the allocation of a given amount of resource over a set of activities while minimizing a given separable cost function (or, equivalently, maximizing a given separable utility function). Over the years, several variations and extensions of this basic setting have been studied, with different types of individual cost functions, additional constraints, and allocation restrictions such as integer-valued allocations \cite{Katoh2013}. 

With regard to the constraint structure, we focus on a general version of the RAP that occurs widely in applications, namely the RAP with additional submodular constraints (see, e.g., \cite{Groenevelt1991, Fujishige2005}). In this problem, for each subset of the activities, there is an upper bound on the total amount of resource allocated to these activities and this bound is given by a submodular set function. This problem has many applications in, e.g., machine learning \cite{Bach2010, Bach2013}, scheduling \cite{Shioura2018,Liu2020}, and game theory \cite{Jain2010, He2012,Harks2014}. Moreover, important special cases of this problem are the RAP with box constraints (see \cite{Patriksson2008}), the RAP with generalized bound constraints (see \cite{SchootUiterkamp2019a}), and the RAP with nested constraints (see \cite{Vidal2018}). Important application areas for in particular these special cases include, among many others, regularized learning \cite{Dai2006,Mairal2011}, telecommunications and energy management \cite{Palomar2005,vdKlauw2017, Xing2020}, and statistics \cite{Neyman1934,Friedrich2015} (see also the overviews in \cite{Patriksson2008} and \cite{Vidal2018}).

Concerning the objective, state-of-the-art solution approaches for RAPs generally allow each activity to have its own arbitrary (convex) cost function. Although this is an interesting aspect from a mathematical point of view, it is questionable whether this is a given situation in practical problems. In applications, often the structure of the cost functions is the same for all activities (e.g., all functions are quadratic) and the difference lies primarily in different parameters for these functions (e.g., each function has different multiplicative factors). In fact, this is the case for the large majority of applications studied in (the works surveyed in) \cite{Patriksson2008,Patriksson2015,Akhil2018,Vidal2018}. 

Scanning existing solution approaches for RAPs, one observes that, to obtain algorithms with a low computational complexity, many approaches rely on advanced data structures and procedures to store and manipulate problem parameters and intermediate bookkeeping values. However, it is often unclear whether such structures and procedures are actually fast in practice due to the lack of computational experiments (see also \cite{Patriksson2015}). Moreover, the fact that these algorithms are highly complex often makes it difficult to implement them, which limits their adaptability in practice where other aspects such as code maintainability and ease of use are often considered as more important (see also \cite{Muller-Hannemann2010}).

The aspects discussed above motivate us to study RAPs where the cost functions share a common structure and differ only in the parameter choice. More precisely, we introduce the notion of $(a,b,f)$-separable RAPs, wherein the cost function of each activity $i$ is of the form $a_i f (\frac{x_i}{a_i} + b_i)$, where $f$ is the common convex function and $a_i > 0$ and $b_i$ are given parameters that can be different for different activities~$i$. Such cost functions are an extension of so-called $d$-separable functions introduced in \cite{Veinott1971}. Moreover, they are closely related to the concept of perspective functions \cite{Combettes2018a, Combettes2018b}, which arise naturally in many problems in applied mathematics. In particular, most of the applications in (works surveyed in) \cite{Patriksson2008,Patriksson2015,Akhil2018,Vidal2018} can be modeled as $(a,b,f)$-separable RAPs.

In this article, we show a reduction result concerning $(a,b,f)$-separable RAPs with submodular constraints. More precisely, we show that for given parameters $a$ and $b$ and an instance of this class of RAPs, there exists a feasible solution to this instance that is optimal for any choice of the convex function $f$. In particular, we show that any solution that is optimal to the basic quadratic version of this RAP, i.e., where $f(x_i) = \frac{1}{2} x_i^2$, is also optimal for the $(a,b,f)$-separable version for any choice of~$f$. This means that solving any $(a,b,f)$-separable RAP reduces to solving the quadratic version of this RAP and allows us to solve this problem using any tailored algorithm that solves the quadratic RAP. Thus, to solve this problem, we do not require algorithms designed to solve the more general version with arbitrary convex cost functions, which are in general much slower and less efficient than the tailored algorithms for the quadratic RAP. Moreover, especially for the quadratic RAP over box constraints, many different types of algorithms exist to solve this problem, each of which has different pros and cons given the application \cite{Patriksson2008,Patriksson2015}. Thus, our reduction result allows us to solve a wide range of RAPs using the extensive collection of solution approaches and algorithms for quadratic RAPs.

In the literature, similar results already exist for specific RAPs. For RAPs over submodular constraints, \cite{Fujishige1980} showed that the problem with quadratic cost functions is equivalent to the problem of computing a lexicographically optimal base with regard to a given weight vector. \cite{Nagano2012} extends this result to a range of different strictly convex cost functions for the case of continuous variables. Their result is used in \cite{Nagano2013} to solve optimization problems on graphs and in \cite{Shioura2017} to derive efficient algorithms for processor scheduling problems. For a special case of RAPs with nested constraints, the equivalence of $(a,b,f)$-separable RAPs is proven in \cite{Akhil2018} for the case where the functions $f$ are strictly convex and differentiable, $b = 0$, and with continuous variables. 

Some reduction results can be derived from existing algorithms for specific applications in the literature. An example of this concerns the vessel speed optimization problem (see, e.g., \cite{Norstad2011}). In this problem, a ship traverses a given route between ports and must dock at each given port within a specific time window. The goal is to determine the ship's speed between each leg of the route while minimizing fuel costs. The authors in \cite{Norstad2011} propose a recursive-smoothing algorithm (RSA) for this problem, which is shown to be optimal by \cite{Hvattum2013}. This algorithm does not require knowledge on the fuel cost function other than that it is convex. Thus, the optimal solution outputted by this algorithm is indifferent of the choice of cost function.

Another example is the scheduling of tasks on a single processor with agreeable deadlines (see, e.g., \cite{Gerards2014}). Here, we are given a number of tasks that must be processed on a single processor, each of which has its own workload, arrival time, and deadline. The goal is to assign processor speeds to tasks such that all tasks are finished before their deadline while minimizing the total energy usage of the processor. This energy usage depends on the workload of each task and on the required power to maintain a given processor speed. Analogously to the vessel speed optimization problem, the processor scheduling problem can be solved using the RSA without any knowledge of the nature of the convex cost function \cite{Huang2009}. Thus, also the optimal solution outputted by this algorithm does not depend on the power function.

Our reduction result generalizes all the above results to general convex functions $f$, i.e., not necessarily \emph{strictly} convex or differentiable, and to both continuous and integer variables. In particular, in the case of continuous variables and a \emph{strictly} convex function~$f$, our reduction result becomes an equivalence result since the optimal solution to any strictly convex optimization problem is unique. In fact, given the parameters $a$ and $b$, an instance to RAP, and two strictly convex functions $f$ and $\bar{f}$, we show that the $(a,b,f)$-separable and $(a,b,\bar{f})$-separable versions of this RAP have the same unique optimal solution and are thus equivalent.

Next to the theoretical impact of our reduction result, we demonstrate the added value of this result for several applications. For a number of problems from the areas of telecommunications, statistics, and energy management, we show that our results provide new insights and improve several existing solution approaches. In particular, we show that the vessel speed optimization problem and the processor scheduling problem mentioned above can be solved in $O(n \log n)$ time. This is an improvement over their currently best known time complexity of $O(n^2)$.

Summarizing, our technical contributions are as follows:
\begin{enumerate}
\item We show that $(a,b,f)$-separable RAPs with submodular constraints reduce to their quadratic versions, making available the more extensive collection of fast and efficient algorithms for quadratic RAPs to solve these problems.
\item
We discuss the impact of this result on some special cases of the considered RAPs and derive new worst-case time complexity results for these cases.
\item
We apply our results to core problems from several application areas and show how they can be solved more efficiently using our reduction result. For two of these problems, we improve their worst-case time complexity from $O(n^2)$ to $O(n \log n)$.
\end{enumerate}
Moreover, on a higher level and perhaps of independent interest, our work demonstrates that methodological research on RAPs is conducted independently in many different research fields, be it under different names. As a consequence, many conceptual insights, structural properties, and solution approaches for RAPs have been re-invented and re-discovered many times over the years, both within the same field and independently in several fields. Therefore, we aim to promote a cross-disciplinary approach for studying RAPs. Such an approach will both reduce the amount of future re-discoveries and re-inventions and allow researchers to benefit from the many available different perspectives on RAPs.

The organization of this article is as follows. In Section~\ref{sec_pre}, we provide formal problem definitions of the studied RAPs and introduce the used notation. In Section~\ref{sec_equiv}, we prove the reduction result and in Section~\ref{sec_complexity}, we discuss the impact of this result on each of the studied RAPs. In Section~\ref{sec_appl}, we demonstrate the impact of our reduction result on several application areas. Finally, Section~\ref{sec_concl} contains our conclusions.

\section{Problem formulation and preliminaries}
\label{sec_pre}

In this section, we formulate the studied resource allocation problems, i.e., the RAP over submodular constraints and its special cases, and introduce the used notation and definitions. 

\subsection{Notation and definitions}
In the following, we introduce some notation and properties of  used functions and sets. For this, let $\mathcal{N} := \lbrace 1,\ldots,n \rbrace$ be the index set of the given activities. We call a convex function $\Phi : \ \mathbb{R}^n \rightarrow \mathbb{R}$ \emph{separable} if it can be written as the sum of single-variable convex functions, i.e., if $\Phi(x) = \sum_{i \in \mathcal{N}} \phi_i (x_i)$ for some single-variable convex functions $\phi_i : \ \mathbb{R} \rightarrow \mathbb{R}$, $i \in \mathcal{N}$. Moreover, given two vectors $a \in \mathbb{R}^n_{>0}$ and $b \in \mathbb{R}^n$ and a single-variable convex function~$f : \ \mathbb{R} \rightarrow \mathbb{R}$, we say that~$\Phi$ is \emph{$(a,b,f)$-separable}  if each function $\phi_i$ can be written as
\begin{equation*}
\phi_i(x_i) = a_i f \left(\frac{x_i}{a_i} + b_i \right).
\end{equation*}
Note that we do not pose any restrictions on~$f$ other than convexity. Hence, both $f$ and $\phi_i$ are not necessarily \emph{strictly} convex or differentiable. We denote the left and right derivatives of $f$ by $f^{-}$ and $f^{+}$ respectively. It follows that the left derivative $\phi_i^{-}$ of $\phi_i$ is given by
\begin{equation*}
\phi_i^{-}(x_i) := \lim_{y \uparrow x_i} \frac{\phi_i(y) - \phi_i(x_i)}{y - x_i}
=
\lim_{y \uparrow x_i} \frac{a_i f \left( \frac{y}{a_i} + b_i \right) - a_i f \left( \frac{x_i}{a_i} + b_i \right)}{y - x_i}
= f^{-}\left(\frac{x_i}{a_i} + b_i \right).
\end{equation*}
Analogously, the right derivative $\phi_i^{+}$ of $\phi_i$ is given by $\phi_i^{+} (x_i) =  f^{+}\left(\frac{x_i}{a_i} + b_i \right)$. Throughout this article, we call a resource allocation problem $(a,b,f)$-separable if its objective function is $(a,b,f)$-separable.

We denote the vector of ones of dimension~$n$ by $\bar{e}$. Furthermore, for each index $i \in \mathcal{N}$, we denote by $e^i \in \mathbb{R}^n$ the standard basis vector associated with~$i$, i.e., the vector whose $i^{\text{th}}$ entry is 1 and whose other entries are all zero. For a given set $\mathcal{C} \subset \mathbb{R}^n$ and $x \in \mathcal{C}$, let $\mathcal{E}_{\mathcal{C}}(x)$ denote the set of index pairs $(i,k) \in \mathcal{N}^2$ for which we can always shift a small amount from $x_i$ to $x_k$ without violating feasibility. More precisely, we define
\begin{equation*}
\mathcal{E}_{\mathcal{C}}(x) := \lbrace (i,k) \in \mathcal{N}^2 \ | \ x + \epsilon(e^k - e^i) \in \mathcal{C} \text{ for some } \epsilon > 0 \rbrace.
\end{equation*}
Each pair in $(i,k) \in \mathcal{E}_{\mathcal{C}}(x)$ is called an \emph{exchangeable pair}.

Finally, let $r: \ 2^{\mathcal{N}} \rightarrow \mathbb{R}$ be a set function on the ground set~$\mathcal{N}$. The set function~$r$ is \emph{submodular} if
$r(\mathcal{X} \cup \mathcal{Y}) + r(\mathcal{X} \cap \mathcal{Y}) \leq r(\mathcal{X}) + r(\mathcal{Y}) $ for any $\mathcal{X},\mathcal{Y} \subseteq \mathcal{N}$, where we assume that $r(\emptyset) = 0$.

\subsection{Problem classification}
The basic version of the resource allocation problem calls for an allocation $x \in \mathbb{R}^n$ of a given amount of resource $R \in \mathbb{R}$ over a set of activities $\mathcal{N}$ such that a given convex cost function $\Phi(x)$ of the allocation is minimized. This problem can be formulated as follows:
\begin{align*}
\text{RAP} \ : \ \min_x \ & \Phi(x) \\
\text{s.t. } & \sum_{i \in \mathcal{N}} x_i = R.
\end{align*}
Based on this basic version, we can formulate several extensions of the problem RAP with different types of cost functions, additional constraints, and different types of decision variables. To clearly distinguish between these problems, we adapt a classification scheme similar to that in \cite{Ibaraki1988} and \cite{Katoh2013}, i.e., we specify each problem by three fields $\alpha$/$\beta$/$\gamma$, where~$\alpha$ specifies the objective function, $\beta$ describes the constraint set, and $\gamma$ specifies the nature of the decision variables.

For $\alpha$, we consider the following options:
\begin{enumerate}
\item
\emph{Separable} (S): $\Phi$ is separable.
\item
\emph{$(a,b,f)$-separable} ($(a,b,f)$-S): $\Phi$ is $(a,b,f)$-separable.
\item
 \emph{Quadratic} ($(a,b)$-Q): $\Phi$ is $(a,b,f)$-separable using $f(y) = \frac{1}{2}y^2$. This means that $\Phi$ is both separable and quadratic.
\end{enumerate}

For $\beta$, we consider the follows special constraint structures, where we use $\mathcal{M} := \lbrace 1,\ldots, m \rbrace$ as an index set for additional constraints:
\begin{enumerate}
\item
\emph{Box constraints} (Box): $l_i \leq x_i \leq u_i$ for all $i \in \mathcal{N}$.
\item
\emph{Generalized bound constraints} (GBC): Next to the box constraints also constraints of the form $L_j \leq \sum_{i \in \mathcal{N}_j} x_i \leq U_j$, $j \in \mathcal{M}$ are given, where the sets $\mathcal{N}_1,\ldots,\mathcal{N}_m$ form a partition of $\mathcal{N}$.
\item
\emph{Nested constraints} (NC): Next to the box constraints also constraints of the form $L_j \leq \sum_{i \in \mathcal{N}_j} x_i \leq U_j$, $j \in \mathcal{M}$ are given, where the sets $\mathcal{N}_1,\ldots,\mathcal{N}_m$ are such that $\mathcal{N}_1 \subset \dots \subset \mathcal{N}_m \subset \mathcal{N}$.
\item
\emph{Laminar (or tree) constraints} (LC): Constraints of the form $L_j \leq \sum_{i \in \mathcal{N}_j} x_i \leq U_j$, $j \in \mathcal{M}$ are given, where the subsets $\mathcal{N}_1,\ldots,\mathcal{N}_m$ of $\mathcal{N}$ have the following property: if $\mathcal{N}_j \cap \mathcal{N}_{\ell} \neq \emptyset$, then either $\mathcal{N}_j \subset \mathcal{N}_{\ell}$ or $\mathcal{N}_j \supset \mathcal{N}_{\ell}$ for all $j,\ell \in \mathcal{M}$.
\item
\emph{Submodular constraints} (SC): Constraints of the form $\sum_{i \in \mathcal{S}} x_i \leq r(\mathcal{S})$, $\mathcal{S} \subset \mathcal{N}$ and $\sum_{i \in \mathcal{N}} x_i = r(\mathcal{N})$ are given, where $r$ is a given submodular function with $r(\mathcal{N}) = R$.
\end{enumerate}
Note that the constraint structures Box, GBC, and NC are special cases of the structure LC. Moreover, it can be shown that the structure LC is a special case of the structure SC (see Appendix~\ref{app_lemma_LC}). Thus, all constraint structures are special cases of SC. We discuss each of these special cases in more detail in Section~\ref{sec_complexity}.

For $\gamma$ we consider the following two cases:
\begin{enumerate}
\item
\emph{Continuous decision variables} (C): $x \in \mathbb{R}^n$.
\item
\emph{Integer decision variables} (I): $x \in \mathbb{Z}^n$.
\end{enumerate}

Table~\ref{tab_class} summarizes the possible entries of $\alpha$, $\beta$, and $\gamma$ as a compact reference. To simplify the presentation, we assume that whenever we specify $\beta$, the parameters that define the corresponding constraints are fixed. For example, when we consider the problems $(a,b,f)$-S/LC/C and $(a,b)$-Q/LC/C for some vectors $a,b$ and convex function $f$, we assume that the subsets $\mathcal{N}_1,\ldots, \mathcal{N}_m$ and vectors $L := (L_j)_{j \in \mathcal{M}}$ and $U := (U_j)_{j \in \mathcal{M}}$ are fixed.

\begin{table}[ht!]
\centering
\begin{tabular}{r l l}
\toprule
Field & Entry & Meaning \\
\midrule
$\alpha$ & S & Separable objective function \\
& $(a,b,f)$-S & $(a,b,f)$-separable objective function \\
& $(a,b)$-Q & $(a,b,f)$-separable objective function with $f(x_i) = \frac{1}{2}x_i^2$ \\
\midrule
$\beta$ & Box & Bounds on individual variables \\
& GBC & Bounds on individual variables and disjoint sums of variables \\
& NC & Bounds on individual variables and nested sums of variables \\
& LC & Bounds on laminar sums of variables \\
& SC & Bounds on sums of variables given by a submodular function \\
\midrule
$\gamma$ & C & Continuous variables ($x \in \mathbb{R}^n$) \\
& I & Integer variables ($x \in \mathbb{Z}^n$) \\
\bottomrule
\end{tabular}
\caption{Overview of entries for the problem classification $\alpha$/$\beta$/$\gamma$.}
\label{tab_class}
\end{table}

Finally, apart from these extensions of RAP, we study a general constraint class where a constraint $x \in \mathcal{C}$ for some set $\mathcal{C} \subset \mathbb{R}^n$ is given. We pose no restrictions on this set other than that it is nonempty. We denote this constraint class by $\mathcal{C}$ and denote the corresponding separable, $(a,b,f)$-separable, and $(a,b)$-quadratic versions of this problem by S/$\mathcal{C}$, $(a,b,f)$-S/$\mathcal{C}$, and $(a,b)$-Q/$\mathcal{C}$ respectively.

\section{Reduction of $(a,b,f)$-separable RAPs to quadratic RAPs}
\label{sec_equiv}

The goal of this section is to show for all the RAPs introduced in the previous section that their $(a,b,f)$-separable versions reduce to their quadratic versions. More precisely, given a constraint structure $\beta$, variable type $\gamma$, convex function $f$, and vectors $a \in \mathbb{R}^n_{>0}$ and $b \in \mathbb{R}^n$, we show that any optimal solution to $(a,b)$-Q/$\beta$/$\gamma$ is also optimal for $(a,b,f)$-S/$\beta$/$\gamma$. This means that we can solve $(a,b,f)$-S/$\beta$/$\gamma$ by solving $(a,b)$-Q/$\beta$/$\gamma$. Note that for many of these quadratic RAPs, tailored algorithms exist that are faster and more efficient than algorithms for the case with arbitrary convex cost functions. Thus, this reduction result allows us to solve $(a,b,f)$-S/$\beta$/$\gamma$ problems using fast algorithms for their quadratic special case.

We start by considering the general constrained optimization problem S/$\mathcal{C}$ with a convex separable objective function:
\begin{align*}
\text{S/}\mathcal{C} \ : \ \min_x \ & \sum_{i \in \mathcal{N}} \phi_i (x_i) \\
\text{s.t. } & x \in \mathcal{C},
\end{align*}
where $\mathcal{C} \subset \mathbb{R}^n$. Recall that we do not assume any properties on the set $\mathcal{C}$ other than that it is nonempty and that all RAPs introduced in the previous section are special instances of this problem.

We show that if S/$\mathcal{C}$ satisfies a certain optimality condition, any optimal solution to $(a,b)$-Q/$\mathcal{C}$ is also optimal for $(a,b,f)$-S/$\mathcal{C}$. This optimality condition states that a feasible solution $x$ to S/$\mathcal{C}$ is optimal if and only if moving an arbitrary amount from one variable $x_i$ to another variable $x_k$ while maintaining feasibility never leads to a decrease in objective value. We state this condition as Condition~\ref{prop_opt_cond} and give the mentioned reduction result in Theorem~\ref{th_same}.
\begin{condition}
Given separable convex functions $\phi_i  : \ \mathbb{R} \rightarrow \mathbb{R}$ and a set $\mathcal{C} \subset \mathbb{R}$, a feasible solution $x$ to S/$\mathcal{C}$ is optimal if and only if we have for each exchangeable pair $(i,k) \in \mathcal{E}_{\mathcal{C}}(x)$ that $\phi_k^{+} (x_k) \geq \phi_i^{-} (x_i)$.
\label{prop_opt_cond}
\end{condition}
\begin{theorem}
Let the set $\mathcal{C}$, a convex function~$f$, and $a \in \mathbb{R}^n_{>0}$ and $b \in \mathbb{R}^n$ be given. If Condition~\ref{prop_opt_cond} is satisfied by S/$\mathcal{C}$ and $x \in \mathcal{C}$ is optimal for $(a,b)$-Q/$\mathcal{C}$, then $x$ is also optimal for $(a,b,f)$-S/$\mathcal{C}$.
\label{th_same}
\end{theorem}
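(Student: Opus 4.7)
The plan is to chain two applications of Condition~\ref{prop_opt_cond}: one in the easy direction (optimality of $x$ for the quadratic problem $\Rightarrow$ a derivative inequality in the linear variable $x_i/a_i + b_i$) and one in the hard direction (that same inequality $\Rightarrow$ optimality of $x$ for $(a,b,f)$-S/$\mathcal{C}$). The bridge between the two is the monotonicity of the one-sided derivatives of the common convex function~$f$.

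First I would unfold what optimality for $(a,b)$-Q/$\mathcal{C}$ means through Condition~\ref{prop_opt_cond}. Here $\phi_i(x_i) = \tfrac{1}{2} a_i (x_i/a_i + b_i)^2$, so $\phi_i$ is differentiable with $\phi_i^-(x_i) = \phi_i^+(x_i) = x_i/a_i + b_i$. Hence Condition~\ref{prop_opt_cond}, applied to the quadratic objective, yields the simple inequality
\begin{equation*}
\frac{x_k}{a_k} + b_k \;\geq\; \frac{x_i}{a_i} + b_i
\qquad \text{for every } (i,k) \in \mathcal{E}_{\mathcal{C}}(x).
\end{equation*}
This is the only way the quadratic optimality is used.

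Next I would verify Condition~\ref{prop_opt_cond} for $(a,b,f)$-S/$\mathcal{C}$ at the same point $x$. Using the identities $\phi_i^-(x_i) = f^-(x_i/a_i + b_i)$ and $\phi_k^+(x_k) = f^+(x_k/a_k + b_k)$ recalled in the preliminaries, what must be shown is that, for each exchangeable pair,
\begin{equation*}
f^+\!\left(\frac{x_k}{a_k} + b_k\right) \;\geq\; f^-\!\left(\frac{x_i}{a_i} + b_i\right).
\end{equation*}
This is exactly where the key lemma about convex functions comes in: for a convex $f:\mathbb{R}\to\mathbb{R}$, the left and right derivatives are nondecreasing and satisfy $f^-(y)\leq f^+(y)$ everywhere, and moreover $f^+(y_1)\leq f^-(y_2)$ whenever $y_1<y_2$. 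Splitting into the cases $y_k > y_i$ and $y_k = y_i$ with $y_j := x_j/a_j + b_j$ handles the inequality in one line each. Having verified the derivative inequality for every exchangeable pair, a final invocation of Condition~\ref{prop_opt_cond} (in its sufficiency direction, now applied to the $(a,b,f)$-separable objective) concludes that $x$ is optimal for $(a,b,f)$-S/$\mathcal{C}$.

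There is no real obstacle; the only step that requires a moment of care is the monotonicity argument for $f^-,f^+$, which I would either quote as a standard fact about convex functions on $\mathbb{R}$ or justify briefly from the monotonicity of secant slopes. The rest is a mechanical translation between the quadratic derivative and the general one-sided derivatives, exploiting that both problems share the composite argument $x_i/a_i + b_i$.
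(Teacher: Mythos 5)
Your proposal is correct and follows essentially the same route as the paper's proof: apply Condition~\ref{prop_opt_cond} to the quadratic problem to obtain $x_k/a_k + b_k \geq x_i/a_i + b_i$ for every exchangeable pair, transfer this via the monotonicity of the one-sided derivatives of $f$ to $\phi_k^+(x_k) \geq \phi_i^-(x_i)$, and invoke the sufficiency direction of the condition for the $(a,b,f)$-separable objective. The only cosmetic difference is that the paper chains $f^+(y_k) \geq f^+(y_i) \geq f^-(y_i)$ directly rather than splitting into the cases $y_k > y_i$ and $y_k = y_i$.
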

\begin{proof}
Let $x$ be an optimal solution to $(a,b)$-Q/$\mathcal{C}$. Note that for the problem $(a,b)$-Q/$\mathcal{C}$, we have that $\phi_i(x_i) = a_i \cdot \frac{1}{2}(\frac{x_i}{a_i} + b_i)^2 = \frac{1}{2} \frac{x_i^2}{a_i} + b_i x_i$ for all $i \in \mathcal{N}$. Thus, by applying Condition~\ref{prop_opt_cond} to $(a,b)$-Q/$\mathcal{C}$, we have that $\frac{x_k}{a_k} + b_k \geq \frac{x_i}{a_i} + b_i$ for all $(i,k) \in \mathcal{E}_{\mathcal{C}}(x)$. Since by convexity of $f$ the right derivative~$f^{+}$ is non-decreasing and we have $f^{+}(y) \geq f^{-}(y)$ for all $y \in \mathbb{R}$, it follows that $f^+\left(\frac{x_k}{a_k} + b_k \right) \geq f^+\left(\frac{x_i}{a_i} + b_i \right) \geq f^-\left(\frac{x_i}{a_i} + b_i \right)$ for all $(i,k) \in \mathcal{E}_{\mathcal{C}}(x)$. Note that this is equivalent to the statement $\phi_k^{+} (x_k) \geq \phi_i^{-} x_i$ for all $(i,k) \in \mathcal{E}_{\mathcal{C}}(x)$ where $\phi_{i'} (y) = a_{i'} f(\frac{x_{i'}}{a_{i'}} + b_{i'})$ for all $i' \in \mathcal{N}$. Thus, by applying Condition~\ref{prop_opt_cond} to $(a,b,f)$-S/$\mathcal{C}$, this implies that $x$ is optimal for $(a,b,f)$-S/$\mathcal{C}$.
\end{proof}
Note that Theorem~\ref{th_same} does not require the problem S/$\mathcal{C}$ to be a RAP. This means that this theorem and thus our reduction result is more widely applicable to other problems, provided that they satisfy Condition~\ref{prop_opt_cond}.

It is well-known that S/SC/$\gamma$ satisfies Condition~\ref{prop_opt_cond} for $\gamma \in \lbrace \text{C}, \text{I} \rbrace$ (see, e.g., \cite{Groenevelt1991, Fujishige2005}). To gain some insight in why this is the case, we provide for the interested reader in Appendix~\ref{sec_LC} an alternative proof for this claim for the relevant special case S/LC/$\gamma$ that relies only on basic concept from convex analysis such as subgradients. It follows that Theorem~\ref{th_same} can be applied to S/SC/$\gamma$ and in particular also to all special cases of this problem:
\begin{corollary}
Let a convex function~$f$, vectors $a \in \mathbb{R}^n_{>0}$, $b \in \mathbb{R}^n$, and entries $\beta$ and $\gamma$ as specified in Table~\ref{tab_class} be given. If $x$ is optimal for $(a,b)$-Q/$\beta$/$\gamma$, then $x$ is also optimal for $(a,b,f)$-S/$\beta$/$\gamma$.
\label{col_same}
\end{corollary}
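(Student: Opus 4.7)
The plan is to obtain Corollary~\ref{col_same} as an immediate specialization of Theorem~\ref{th_same}. Since Theorem~\ref{th_same} already yields the desired implication whenever the abstract problem S/$\mathcal{C}$ satisfies Condition~\ref{prop_opt_cond}, the task reduces to identifying, for each pair $(\beta,\gamma)$ from Table~\ref{tab_class}, the corresponding feasible set $\mathcal{C}$ and verifying Condition~\ref{prop_opt_cond} for S/$\mathcal{C}$.

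First I would exploit the inclusion hierarchy noted in Section~\ref{sec_pre}: the structures Box, GBC, NC, and LC are all special cases of SC (the last inclusion being proved in Appendix~\ref{app_lemma_LC}). Thus, regardless of $\gamma$, the feasible set of each of these RAPs is of the form $\mathcal{C}=\{x:\sum_{i\in\mathcal{S}}x_i\le r(\mathcal{S})\ \forall\mathcal{S}\subset\mathcal{N},\ \sum_{i\in\mathcal{N}}x_i=r(\mathcal{N})\}$ (intersected with $\mathbb{Z}^n$ in the integer case) for a suitable submodular $r$, and so it suffices to check Condition~\ref{prop_opt_cond} for $\beta=$~SC. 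This is precisely the well-known exchange-characterization of optimality for separable convex minimization on the base polytope, respectively the set of integral bases, of a submodular system; the paper already flags this fact just before the corollary and cites \cite{Groenevelt1991,Fujishige2005}, with a self-contained subgradient proof for the LC case provided in Appendix~\ref{sec_LC}.

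Having Condition~\ref{prop_opt_cond} for S/SC/$\gamma$, one applies Theorem~\ref{th_same} with $\mathcal{C}$ taken as the feasible set of the relevant $(\beta,\gamma)$-instance; the theorem then gives exactly the statement of the corollary. The one subtlety worth flagging, and which I expect to be the only nontrivial point, is the integer case: here $\mathcal{E}_\mathcal{C}(x)$ collects only pairs that remain feasible after a unit swap ($\epsilon=1$), so Condition~\ref{prop_opt_cond} must be read with that convention in mind. No adjustment to the proof of Theorem~\ref{th_same} is necessary, however, because the chain of inequalities used there rests solely on convexity of $f$ and on $f^+(y)\ge f^-(y)$, properties that are insensitive to whether $x$ lies in $\mathbb{R}^n$ or $\mathbb{Z}^n$. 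With this observation the corollary drops out with no further work beyond citing the classical submodular optimality result.
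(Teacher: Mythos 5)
Your proposal matches the paper's own argument: the paper likewise derives the corollary by noting that Box, GBC, NC, and LC are special cases of SC (with the LC-to-SC inclusion in Appendix~\ref{app_lemma_LC}), that S/SC/$\gamma$ satisfies Condition~\ref{prop_opt_cond} by the classical result cited from \cite{Groenevelt1991, Fujishige2005} (with the self-contained LC proof in Appendix~\ref{sec_LC}), and then applying Theorem~\ref{th_same}. Your remark about the integer case is a correct reading of the definition of $\mathcal{E}_{\mathcal{C}}(x)$ and requires no modification of the theorem, exactly as you say.
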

This corollary is an extension of the equivalence results in \cite{Nagano2012}, where the reduction result is shown for the two special cases with continuous variables where $f$ is \emph{strictly} convex and differentiable or where $\phi_i = f$ for all $i \in \mathcal{N}$.

The validity of the reduction result of Theorem~\ref{th_same} for RAPs with submodular constraints and its special cases implies that any algorithm for solving the quadratic version of this problem can be used to solve the $(a,b,f)$-separable version. In particular, any time complexity or efficiency results for the quadratic version apply also to the $(a,b,f)$-separable version:
\begin{corollary}
Let a convex function~$f$, vectors $a \in \mathbb{R}^n_{>0}$, $b \in \mathbb{R}^n$, and entries $\beta$ and $\gamma$ as specified in Table~\ref{tab_class} be given. The worst-case time complexity of $(a,b,f)$-S/$\beta$/$\gamma$ equals that of $(a,b)$-Q/$\beta$/$\gamma$.
\label{col_complex}
\end{corollary}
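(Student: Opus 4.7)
The plan is to establish the equality of worst-case complexities by two containments, using Corollary~\ref{col_same} for one direction and the observation that the quadratic problem is a syntactic special case of the $(a,b,f)$-separable problem for the other. The main content lies in the first direction; the second is essentially by definition.

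For the upper bound, I would argue as follows. Let $\mathcal{A}$ be any algorithm that, on input $(a,b)$ together with the parameters defining~$\beta$ and the variable-type specification~$\gamma$, produces an optimal solution to $(a,b)$-Q/$\beta$/$\gamma$ in time $T(n)$. Given an instance of $(a,b,f)$-S/$\beta$/$\gamma$ with the same constraint parameters and the same vectors $a$ and $b$ (the function $f$ being arbitrary convex), I would simply ignore $f$ and run $\mathcal{A}$ on the pair $(a,b)$ and the constraint data. The output $x$ is optimal for $(a,b)$-Q/$\beta$/$\gamma$ by correctness of~$\mathcal{A}$, and by Corollary~\ref{col_same} the same $x$ is optimal for $(a,b,f)$-S/$\beta$/$\gamma$. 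Hence the $(a,b,f)$-separable instance is solved in time $T(n)$, so its worst-case complexity is at most that of the quadratic version.

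For the reverse inequality, I would observe that the quadratic version is exactly the $(a,b,f)$-separable version with the particular choice $f(y)=\tfrac{1}{2}y^{2}$, which is a convex function. Therefore any algorithm that solves $(a,b,f)$-S/$\beta$/$\gamma$ for arbitrary convex $f$ also solves $(a,b)$-Q/$\beta$/$\gamma$ by specializing $f$, without altering the input size. This gives the matching inequality and completes the argument.

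The only point requiring a little care is to note that the reduction in the upper-bound direction is non-constructive in~$f$: the algorithm $\mathcal{A}$ never evaluates $f$, so its running time does not depend on any oracle complexity for $f$. This is precisely what makes Corollary~\ref{col_same} a true complexity reduction rather than merely a structural equivalence, and I do not expect any genuine obstacle beyond stating these two inclusions carefully.
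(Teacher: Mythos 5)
Your proposal is correct and matches the paper's (largely implicit) argument: the paper derives this corollary directly from Corollary~\ref{col_same} by noting that any algorithm for the quadratic version also solves the $(a,b,f)$-separable version, with the reverse inequality following because the quadratic problem is the special case $f(y)=\tfrac{1}{2}y^{2}$. Your additional remark that the reduction never evaluates~$f$, so no oracle cost for~$f$ enters the running time, is a worthwhile clarification but does not change the substance.
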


Finally, for the case of continuous variables, Theorem~\ref{th_same} holds also when we replace the problem $(a,b)$-Q/$\beta$/C by $(a,b,\bar{f})$-S/$\beta$/C, where $\bar{f}$ is a \emph{strictly} convex function. This effectively turns our reduction result into an equivalence result between these two problems:
\begin{corollary}
Let $a \in \mathbb{R}^n_{>0}$, $b \in \mathbb{R}^n$, and entries $\beta$ as specified in Table~\ref{tab_class} be given, and let $\bar{f}$ be a \emph{strictly} convex function and $f$ be an arbitrary convex function. If $x$ is optimal for $(a,b,\bar{f})$-S/$\beta$/C, then $x$ is also optimal for $(a,b,f)$-S/$\beta$/C.
\label{col_strictly}
\end{corollary}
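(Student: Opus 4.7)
The plan is to mimic the proof of Theorem~\ref{th_same}, with the quadratic function replaced by the strictly convex $\bar{f}$. In that earlier proof, the quadratic structure is used only to translate the derivative inequality supplied by Condition~\ref{prop_opt_cond} into the argument inequality $\frac{x_k}{a_k} + b_k \geq \frac{x_i}{a_i} + b_i$; this was immediate because the derivative of $\frac{1}{2}y^2$ is the identity. For a general strictly convex $\bar{f}$, I would instead exploit strict monotonicity of its one-sided derivatives.

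Concretely, I first apply Condition~\ref{prop_opt_cond} to $(a,b,\bar{f})$-S/$\beta$/C, which is legitimate because every $\beta$ in Table~\ref{tab_class} is a special case of SC and the condition is known to hold for S/SC/C as noted just before Corollary~\ref{col_same}. Optimality of $x$ for $(a,b,\bar{f})$-S/$\beta$/C then yields $\bar{f}^{+}\left(\frac{x_k}{a_k} + b_k\right) \geq \bar{f}^{-}\left(\frac{x_i}{a_i} + b_i\right)$ for every exchangeable pair $(i,k) \in \mathcal{E}_{\mathcal{C}}(x)$.

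The key step, and the only one I expect to require any real work, is the following observation: for a strictly convex function $\bar{f}$ and any reals $y < z$, one has $\bar{f}^{+}(y) < \bar{f}^{-}(z)$. This follows because equality would force both one-sided derivatives of $\bar{f}$ to coincide with a common constant throughout the open interval $(y,z)$, making $\bar{f}$ affine on $[y,z]$ and contradicting strict convexity. Taking the contrapositive with $y = \frac{x_k}{a_k} + b_k$ and $z = \frac{x_i}{a_i} + b_i$ converts the derivative inequality from the previous paragraph into $\frac{x_k}{a_k} + b_k \geq \frac{x_i}{a_i} + b_i$ for every exchangeable pair.

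From this argument inequality the proof concludes exactly as in Theorem~\ref{th_same}: monotonicity of $f^{+}$ combined with $f^{+} \geq f^{-}$ yields $f^{+}\left(\frac{x_k}{a_k} + b_k\right) \geq f^{-}\left(\frac{x_i}{a_i} + b_i\right)$, i.e., $\phi_k^{+}(x_k) \geq \phi_i^{-}(x_i)$ with $\phi_i(y) = a_i f(\tfrac{y}{a_i} + b_i)$. A final application of Condition~\ref{prop_opt_cond} to $(a,b,f)$-S/$\beta$/C then delivers optimality of $x$ for $(a,b,f)$-S/$\beta$/C.
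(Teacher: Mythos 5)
Your proposal is correct, but it takes a genuinely different route from the paper. The paper's own proof is a two-line uniqueness argument: since $\bar{f}$ is strictly convex and the variables are continuous, $x$ is the \emph{unique} optimal solution to $(a,b,\bar{f})$-S/$\beta$/C; by Theorem~\ref{th_same} the optimal solution of $(a,b)$-Q/$\beta$/C is also optimal for $(a,b,\bar{f})$-S/$\beta$/C and hence must coincide with $x$; a second application of Theorem~\ref{th_same} then gives optimality of $x$ for $(a,b,f)$-S/$\beta$/C. You instead re-run the proof of Theorem~\ref{th_same} with $\bar{f}$ in place of the quadratic, and your key lemma --- that strict convexity forces $\bar{f}^{+}(y) < \bar{f}^{-}(z)$ whenever $y < z$, since equality would make $\bar{f}$ affine on $[y,z]$ --- is correctly stated and correctly justified; its contrapositive recovers the argument ordering $\frac{x_k}{a_k}+b_k \geq \frac{x_i}{a_i}+b_i$ that the quadratic case yields for free, after which the conclusion follows exactly as in the theorem. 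What each approach buys: the paper's proof is shorter and reuses Theorem~\ref{th_same} as a black box, but it hinges on uniqueness of the optimizer, which is exactly why the corollary is restricted to $\gamma=\text{C}$. Your argument never invokes uniqueness or the quadratic problem, only Condition~\ref{prop_opt_cond} and the derivative lemma, so it is more self-contained and would in principle apply verbatim to the integer case $\gamma=\text{I}$ (where strictly convex objectives need not have unique optima), giving a slightly stronger statement than the one proved in the paper.
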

\begin{proof}
Since $\bar{f}$ is \emph{strictly} convex, $x$ is the \emph{unique} optimal solution to $(a,b,\bar{f})$-S/$\beta$/C. It follows from Theorem~\ref{th_same} that the unique optimal solution to $(a,b)$-Q/$\beta$/C is $x$ and thus that $x$ is also optimal for $(a,b,f)$-S/$\beta$/C.
\end{proof}
Corollary~\ref{col_strictly} allows us to solve a given continuous $(a,b,f)$-separable RAP using any algorithm that solves the $(a,b,\bar{f})$-separable version of the problem for some strictly convex function $\bar{f}$, i.e., not only just for quadratic objectives. This can be beneficial in cases where efficient algorithms have already been developed for a specific choice of a non-quadratic objective function, motivated by the given application.

In Section~\ref{sec_complexity}, we focus in more detail on each of the special cases of $\alpha$/SC/$\gamma$. In particular, using the reduction result in Theorem~\ref{th_same} and Corollary~\ref{col_complex}, we establish worst-case complexity results for the $(a,b,f)$-separable versions of these problems.

\section{Algorithms and complexity results for special cases}
\label{sec_complexity}

In this section, we first provide for each of the constraint types specified in Table~\ref{tab_class} a brief overview of known algorithms for the given special case and other known complexity results. In particular, we focus on algorithms and complexity results for the quadratic versions of these problems. Second, we use the complexity results on the quadratic versions of the problems to prove complexity results on the $(a,b,f)$-separable versions. These results are based on Theorem~\ref{th_same} and Corollary~\ref{col_same}, which state that we can solve each of these problems by solving the same problem with a quadratic objective function, i.e., where $f(y) = \frac{1}{2}y^2$.

As a compact reference, Tables~\ref{tab_complexity_Q} and~\ref{tab_complexity_S} summarize the complexity results discussed and obtained in this section.
\begin{table}[ht!]
\centering
\begin{tabular}{r  l l }
\toprule
$\beta$ & \multicolumn{2}{c}{$\gamma$}\\\cmidrule{2-3}
& C & I  \\
\midrule
Box & $O(n)$ & $O(n)$  \\
GBC & $O(n)$ & $O(n)$  \\
NC & $O(n \log m)$ & $O(n \log m)$  \\
LC & $O(n^2)$, $O(n \log n)$ (only upper constraints)  & $O(n^2)$  \\
SC & $O(n^2 + n \cdot \text{EO})$ (decomposition), &
$O(n^2 F \log r(\mathcal{N}) + n\tilde{F})$ (decomposition),
\\
&
$ O(n (\log n + \tilde{F}) \log \frac{r(\mathcal{N})}{\epsilon n})$ (greedy) &$O(n (\log n + \tilde{F}) \log \frac{r(\mathcal{N})}{n})$ (greedy) \\
\bottomrule
\end{tabular}
\caption{Overview of the worst-case time complexity results for the problems $(a,b)$-Q/$\beta$/$\gamma$ and $(a,b,f)$-S/$\beta$/$\gamma$.}
\label{tab_complexity_Q}
\end{table}
\begin{table}[ht!]
\centering
\begin{tabular}{r  l l }
\toprule
$\beta$ & \multicolumn{2}{c}{$\gamma$}\\\cmidrule{2-3}
 & C & I  \\
\midrule
Box &  $O(n \log \frac{nR}{\epsilon})$ & $O(n \log \frac{R}{n})$ \\
GBC  & $O(n \log \frac{nR}{\epsilon})$ & $O(n \log \frac{R}{n})$ \\
NC &  $O(n \log m \log \frac{nR}{\epsilon})$ & $O(n \log m \log R)$ \\
LC    & $O(n^2 \log n \log \frac{nR}{\epsilon})$,& $O(n^2 \log n \frac{mR}{n})$, \\
& $O(n \log n \log \frac{nR}{\epsilon})$ (only upper constraints) & $O(n \log n \log \frac{R}{n})$ (only upper constraints) \\
SC & $O(n^2 \log \frac{n r(\mathcal{N})}{\epsilon} + n \cdot EO)$ (decomposition), & $O(n^2 (\log \frac{r(\mathcal{N})}{n} + F \log r(\mathcal{N}) )+ n \tilde{F})$ (decomposition), \\
& 
%$O(n^3 \log^2 n \cdot \text{EO} + n^4 \log^{O(1)} n)$
 $O(n (\log n + \tilde{F}) \log \frac{r(\mathcal{N})}{\epsilon n})$ (greedy) & $O(n (\log n + \tilde{F}) \log \frac{r(\mathcal{N})}{n})$ (greedy)\\
\bottomrule
\end{tabular}
\caption{Overview of the worst-case time complexity results for the problem S/$\beta$/$\gamma$.}
\label{tab_complexity_S}
\end{table}

\subsection{$\alpha$/Box/$\gamma$: Optimization over a single linear constraint}
\label{sec_box}
The resource allocation problem over a single linear constraint, $(a,b)$-S/Box/$\gamma$, can be formulated as follows:
\begin{align}
(a,b)\text{-S/Box/}\gamma \ : \ \min_{x} \ & \sum_{i \in \mathcal{N}} a_i f \left(\frac{x_i}{a_i} + b_i \right) \nonumber \\
\text{s.t. } & \sum_{i \in \mathcal{N}} x_i = R, \label{eq_box_02}\\
& l_i \leq x_i \leq u_i, \quad i \in \mathcal{N}, \label{eq_box_03} \\
& x \in      \begin{cases} 
        \mathbb{R}^n &  \text{if } \gamma = \text{C},\\
        \mathbb{Z}^n   & \text{if } \gamma = \text{I}. \\
    \end{cases} \nonumber
\end{align}
This problem and its more general version S/Box/$\gamma$ have been studied since the 1950s \cite{Patriksson2008}. Since then, many solution approaches and algorithms have been proposed for this problem, especially for the problems Q/Box/$\gamma$. We refer to \cite{Patriksson2008, Patriksson2015} for surveys on the continuous version S/Box/C and to \cite{Katoh2013} for a brief but thorough review on the integer version S/Box/I.

The best known complexities for S/Box/C and S/Box/I are $O(n \log \frac{nR}{\epsilon})$ and $O(n \log \frac{R}{n})$ respectively, where $\epsilon$ is an accuracy parameter  \cite{Frederickson1982, Hochbaum1994}. Furthermore, their quadratic versions $(a,b)$-Q/Box/C and $(a,b)$-Q/Box/I can be solved in $O(n)$ time (\cite{Brucker1984} and \cite{Ibaraki1988} respectively). Through Corollary \ref{col_complex}, this yields the following complexity results for $(a,b,f)$-S/Box/$\gamma$:
\begin{corollary}
Both $(a,b,f)$-S/Box/C and $(a,b,f)$-S/Box/I can be solved in $O(n)$ time.
\label{col_box_complexity}
\end{corollary}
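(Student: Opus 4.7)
The plan is to derive this corollary as an immediate consequence of Corollary~\ref{col_complex} together with two classical results on linear-time algorithms for the quadratic resource allocation problem with box constraints. Concretely, Corollary~\ref{col_complex} tells us that the worst-case time complexity of $(a,b,f)$-S/Box/$\gamma$ coincides with that of $(a,b)$-Q/Box/$\gamma$ for $\gamma \in \lbrace \text{C}, \text{I} \rbrace$, so the task reduces to citing known $O(n)$ algorithms for the two quadratic versions.

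For the continuous case, I would invoke Brucker's $O(n)$ algorithm from~\cite{Brucker1984} for the continuous quadratic knapsack problem with box bounds. To see that this algorithm applies, I would expand
\begin{equation*}
a_i f\!\left( \frac{x_i}{a_i} + b_i \right) = \frac{1}{2}\,\frac{x_i^2}{a_i} + b_i x_i + \frac{1}{2}\, a_i b_i^2,
\end{equation*}
and note that the constant term $\tfrac{1}{2} a_i b_i^2$ does not influence the minimizer, so $(a,b)$-Q/Box/C fits the standard separable quadratic format handled in~\cite{Brucker1984}. For the integer case, I would in the same way cite the $O(n)$ algorithm of~\cite{Ibaraki1988} for the integer quadratic knapsack problem with box bounds, which accommodates the same objective structure.

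There is essentially no real obstacle here, since the conceptual work has already been carried out in Theorem~\ref{th_same} and Corollary~\ref{col_complex}. The only step that deserves a brief mention is confirming that the cited linear-time algorithms indeed handle the shifted and scaled objective $a_i f(x_i/a_i + b_i)$ rather than a plain $\tfrac{1}{2} x_i^2$; as sketched above, this is immediate after expansion (or, equivalently, after the change of variables $y_i = x_i/a_i + b_i$), and the claimed $O(n)$ bounds for both $(a,b,f)$-S/Box/C and $(a,b,f)$-S/Box/I then follow at once.
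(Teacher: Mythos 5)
Your proposal matches the paper's argument: the paper likewise obtains this corollary by combining Corollary~\ref{col_complex} with the known $O(n)$ algorithms of \cite{Brucker1984} for $(a,b)$-Q/Box/C and \cite{Ibaraki1988} for $(a,b)$-Q/Box/I. Your extra remark that the quadratic objective expands to $\frac{1}{2}\frac{x_i^2}{a_i} + b_i x_i$ plus a constant is the same observation the paper makes in the proof of Theorem~\ref{th_same}, so the two proofs are essentially identical.
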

The linear-time algorithms for $(a,b)$-Q/Box/C belong to the class of so-called \emph{breakpoint search} algorithms that solve the problem by efficiently searching for the optimal Lagrange multiplier corresponding to the resource constraint~(\ref{eq_box_02}) (see also \cite{Kiwiel2008a}). The linear-time algorithm for $(a,b)$-Q/Box/I in \cite{Ibaraki1988} first solves the continuous version $(a,b)$-Q/Box/C of this problem using a linear-time algorithm such as in \cite{Brucker1984}. Subsequently, it uses this solution and a specific rounding scheme to construct an instance of $(a,b)$-Q/Box/I with $R = O(n)$ that has the same optimal solution as the original instance of $(a,b)$-Q/Box/I. Using the algorithm in, e.g., \cite{Frederickson1982,Hochbaum1994} for S/Box/I, this instance can be solved in $O(n \log \frac{O(n)}{n}) = O(n)$ time.

With regard to practical execution time, there are several classes of algorithms that outperform the aforementioned linear-time algorithms. For example, for the problem $(a,b)$-Q/Box/C, \cite{Kiwiel2008b} shows that so-called variable-fixing algorithms that run in $O(n^2)$ time are in general faster than linear-time algorithms such as in \cite{Brucker1984}. These algorithms first compute a solution to the problem without the box constraints~(\ref{eq_box_03}) and subsequently determine the optimal value of several variables that exceed their bounds in this solution. This process continues until none of the variables in the solution to the relaxed problem exceeds its bounds. The worst-case time complexity of $O(n^2)$ is attained when only one variable can be fixed to its optimal value during each step in the procedure. However, this is quite a pathological case since it has the property that in the optimal solution all variables are equal to one of their bounds.

Moreover, \cite{Wright2014} shows that for several instances of $(a,b,f)$-S/Box/C, a specialized interior-point method significantly outperforms other approaches including the linear-time breakpoint search approaches. Interior-point methods are iterative approaches where each intermediate solution is obtained from the previous one by taking a step in a search direction that is the solution of a perturbed version of the Karush-Kuhn-Tucker optimality conditions (see also \cite{Gondzio2012}). Normally, the computation of this search direction is the computationally most expensive step of the interior-point method since it requires solving a linear system involving the constraint matrix. However, by exploiting the sparse structure of the constraint matrix for S/Box/C, the number of operations required to solve this system can be reduced from $O(n^3)$ to $O(n)$.

One reason for the in practice quite bad practical performance of linear-time algorithms for $(a,b)$-Q/Box/C is that they require the computation of the median of sets of numbers. However, to attain a linear-time complexity, also linear-time procedures for median finding such as in \cite{Blum1973} have to be used. Such methods are in general significantly slower than alternative sorting-based approaches that run in linearithmic time \cite{Kiwiel2005, Alexandrescu2017}.

The linear-time complexity of $(a,b)$-Q/Box/I is based on the linear-time complexity of $(a,b)$-Q/Box/C and the existence of linear-time algorithms for selecting a $k^{\text{th}}$ smallest element from a collection of sorted lists \cite{Ibaraki1988}. For the latter problem, many studies refer to \cite{Frederickson1982} for such a linear-time algorithm. Analogously to the breakpoint search algorithms for $(a,b)$-Q/Box/C, this algorithm requires a linear-time algorithm for median-finding to attain a linear-time complexity and may thus be slower in practice than alternative sorting-based approaches. It should be noted, however, that recently new linear-time algorithms have been developed that are based on specialized heap data structures and have been shown to have a better practical performance (see, e.g., \cite{Kaplan2019}).

\subsection{$\alpha$/GBC/$\gamma$: Optimization over generalized bound constraints}
Let $\mathcal{N}_1,\ldots,\mathcal{N}_m$ be a partition of the index set $\mathcal{N}$. Given parameters $L,U \in \mathbb{R}^m$, the resource allocation problem with generalized bound constraints can be formulated as
\begin{align}
(a,b)\text{-S/GBC/$\gamma$} \ : \ \min_{x} \ & \sum_{i \in \mathcal{N}} a_i f \left(\frac{x_i}{a_i} + b_i \right) \nonumber \\
\text{s.t } & \sum_{i \in \mathcal{N}} x_i = R, \nonumber \\
& L_j \leq \sum_{i \in \mathcal{N}_j} x_i \leq U_j, \quad j \in \mathcal{M}, \label{GBC_03} \\
& l_i \leq x_i \leq u_i, \quad i \in \mathcal{N}, \nonumber \\
& x \in      \begin{cases} 
        \mathbb{R}^n &  \text{if } \gamma = \text{C},\\
        \mathbb{Z}^n   & \text{if } \gamma = \text{I}. \\
    \end{cases} \nonumber
\end{align} 
Applications of this problem include portfolio optimization \cite{Lobo2007}, transportation problems \cite{Cosares1994}, stratified sampling \cite{Sanathanan1971}, and electric vehicle charging \cite{SchootUiterkamp2019a}.

In the literature, this problem is studied primarily with only the upper bound constraints in~(\ref{GBC_03}). \cite{Hochbaum1994} shows that S/GBC/$\gamma$ with only generalized \emph{upper} bound constraints can be solved in the same time as S/Box/$\gamma$ by reducing the problem to a sequence of subproblems S/Box/$\gamma$ over in total $n$ variables. \cite{SchootUiterkamp2019a} shows a similar result for $(a,b)$-Q/GBC/$\gamma$ with both generalized lower and upper bound constraints, which yields an $O(n)$ algorithm for solving $(a,b)$-Q/GBC/$\gamma$. Thus, by Corollary~\ref{col_complex}, also the problems $(a,b,f)$-S/GBC/$\gamma$ can be solved in $O(n)$ time:

\begin{corollary}
Both $(a,b,f)$-S/GBC/C and $(a,b,f)$-S/GBC/I can be solved in $O(n)$ time.
\end{corollary}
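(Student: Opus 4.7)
The plan is to derive both complexity bounds as immediate consequences of the reduction machinery already in place, with essentially no new content: Corollary~\ref{col_complex} lets us transfer worst-case complexity from the quadratic version to the $(a,b,f)$-separable version, and the $O(n)$ algorithms for the quadratic version are already in the literature.

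First, I would verify that Corollary~\ref{col_complex} is applicable for $\beta = \text{GBC}$ and $\gamma \in \lbrace \text{C}, \text{I} \rbrace$. Since GBC appears in Table~\ref{tab_class} and, as noted after the classification list in Section~\ref{sec_pre}, GBC is a special case of LC and hence of SC, Condition~\ref{prop_opt_cond} holds for S/GBC/$\gamma$. Therefore Theorem~\ref{th_same} applies, and Corollary~\ref{col_complex} yields that the worst-case time complexity of $(a,b,f)$-S/GBC/$\gamma$ equals that of $(a,b)$-Q/GBC/$\gamma$.

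Second, I would invoke the reduction of \cite{SchootUiterkamp2019a}, which shows that $(a,b)$-Q/GBC/$\gamma$ with both generalized lower and upper bound constraints can be reformulated as a sequence of subproblems of type $(a,b)$-Q/Box/$\gamma$ over in total $n$ variables. Combined with the linear-time algorithms for $(a,b)$-Q/Box/C from \cite{Brucker1984} and for $(a,b)$-Q/Q/Box/I from \cite{Ibaraki1988} recalled in Section~\ref{sec_box}, this yields an $O(n)$ algorithm for $(a,b)$-Q/GBC/C and for $(a,b)$-Q/GBC/I. Chaining this with the complexity transfer from the previous paragraph gives the claimed $O(n)$ bound for both $(a,b,f)$-S/GBC/C and $(a,b,f)$-S/GBC/I.

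There is no real technical obstacle here, as the statement is a one-line composition of two already established facts. The only thing worth being explicit about is that GBC genuinely lies within the scope of Corollary~\ref{col_complex}, i.e., that the generalized bound constraints can be written as the constraint set of a submodular-constraint RAP so that Condition~\ref{prop_opt_cond} is guaranteed; this is handled by the inclusions Box $\subset$ GBC $\subset$ NC $\subset$ LC $\subset$ SC noted earlier and the pointer to Appendix~\ref{app_lemma_LC} for the final inclusion.
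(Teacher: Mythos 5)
Your proposal is correct and follows essentially the same route as the paper: it obtains the $O(n)$ bound for $(a,b)$-Q/GBC/$\gamma$ from the reduction in \cite{SchootUiterkamp2019a} to a sequence of linear-time $(a,b)$-Q/Box/$\gamma$ subproblems over $n$ variables in total, and then transfers it to $(a,b,f)$-S/GBC/$\gamma$ via Corollary~\ref{col_complex}. One minor correction: GBC is not a special case of NC (a partition into several nontrivial parts is not a nested family), so the inclusion chain you invoke should be GBC $\subset$ LC $\subset$ SC, which is exactly what the paper establishes and is all that is needed for Condition~\ref{prop_opt_cond} to hold.
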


Alternatively, the continuous problem $(a,b)$-Q/GBC/C can be solved in $O(n)$ time as a special case of quadratic programming with a fixed number of constraints \cite{Megiddo1993}.

\subsection{$\alpha$/NC/$\gamma$: Optimization over nested constraints}
\label{sec_NC}
Let $\mathcal{N}_1,\ldots, \mathcal{N}_m$ be subsets of $\mathcal{N}$ such that $\mathcal{N}_1 \subset \dots \subset \mathcal{N}_m \subset \mathcal{N}$. The resource allocation problem with nested constraints, $(a,b)$-S/NC/$\gamma$, is stated as follows:
\begin{align}
(a,b)\text{-S/NC/$\gamma$} \ : \ \min_{x} \ & \sum_{i \in \mathcal{N}} a_i f \left(\frac{x_i}{a_i} + b_i \right) \nonumber \\
\text{s.t } & \sum_{i \in \mathcal{N}} x_i = R, \nonumber \\
& L_j \leq \sum_{i \in \mathcal{N}_j} x_i \leq U_j, \quad j \in \mathcal{M}, \label{NC_03}\\
& l_i \leq x_i \leq u_i, \quad i \in \mathcal{N}, \label{NC_04} \\
& x \in      \begin{cases} 
        \mathbb{R}^n &  \text{if } \gamma = \text{C},\\
        \mathbb{Z}^n   & \text{if } \gamma = \text{I}. \\
    \end{cases} \nonumber
\end{align}

Research on this problem and the more general problem S/NC/$\gamma$ has almost exclusively focused on the case with either the lower or upper nested constraints in~(\ref{NC_03}) but not both. We refer to \cite{Akhil2018} for a survey on this version of the problem.

The most efficient algorithm for both S/NC/$\gamma$ and $(a,b)$-Q/NC/$\gamma$ is the decomposition algorithm in \cite{Vidal2018}. This algorithm solves the problem as a sequence of S/Box/$\gamma$ subproblems where the single-variable bounds~(\ref{eq_box_03}) of each subproblem are optimal solutions to subproblems deeper in the decomposition hierarchy. The worst-case time complexity of this algorithm is $O(n \log m \log \frac{nR}{\epsilon})$ for S/NC/C, $O(n \log m \log R)$ for S/NC/I, and $O(n \log m)$ for both $(a,b)$-Q/NC/C and $(a,b)$-Q/NC/I. Thus, it follows directly from Corollary~\ref{col_complex} that both $(a,b,f)$-S/NC/C and $(a,b,f)$-S/NC/I can be solved in $O(n \log m)$:
\begin{corollary}
Both $(a,b,f)$-S/NC/C and $(a,b,f)$-S/NC/I can be solved in $O(n \log m)$ time.
\label{col_NC}
\end{corollary}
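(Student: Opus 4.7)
The plan is to deduce Corollary~\ref{col_NC} as a direct application of Corollary~\ref{col_complex} to the nested-constraint setting, using the known complexity of the quadratic version from \cite{Vidal2018}. In other words, I would not build a new algorithm but simply chain together the reduction already established in Section~\ref{sec_equiv} with an existing off-the-shelf complexity bound for the quadratic case.

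First, I would recall the setting. The nested constraints fix the family $\mathcal{N}_1 \subset \dots \subset \mathcal{N}_m \subset \mathcal{N}$ together with the bounds in (\ref{NC_03})--(\ref{NC_04}). Because NC is a special case of LC and LC is in turn a special case of SC (as discussed in Section~\ref{sec_pre} and Appendix~\ref{app_lemma_LC}), the problem S/NC/$\gamma$ inherits Condition~\ref{prop_opt_cond} from S/SC/$\gamma$. Hence Theorem~\ref{th_same} applies with $\mathcal{C}$ equal to the feasible region of (\ref{eq_box_02})--(\ref{NC_04}) intersected with $\mathbb{R}^n$ or $\mathbb{Z}^n$, so that any optimal solution to $(a,b)$-Q/NC/$\gamma$ is also optimal for $(a,b,f)$-S/NC/$\gamma$. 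This is precisely the content that is packaged into Corollary~\ref{col_complex}, and it is the only property of the $(a,b,f)$-separable objective that I would need.

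Second, I would invoke the result cited just above the corollary: the decomposition algorithm of \cite{Vidal2018} solves $(a,b)$-Q/NC/C and $(a,b)$-Q/NC/I in $O(n \log m)$ time, in both the continuous and the integer case, by reducing to a sequence of Box-constrained quadratic subproblems that individually run in linear time by the results discussed in Section~\ref{sec_box}. Since by Corollary~\ref{col_complex} the worst-case complexity of $(a,b,f)$-S/NC/$\gamma$ coincides with that of $(a,b)$-Q/NC/$\gamma$, the bound $O(n \log m)$ transfers verbatim to the $(a,b,f)$-separable problem.

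There is essentially no hard step here; the only subtlety worth flagging in the write-up is that Corollary~\ref{col_complex} applies to NC because NC is subsumed by SC, so that Condition~\ref{prop_opt_cond} is indeed available; once this is observed, the result is one line. If anything deserves extra care, it is making sure that the integer case is covered: the Vidal decomposition returns an integer optimum for $(a,b)$-Q/NC/I, and Theorem~\ref{th_same} holds for $\gamma = \mathrm{I}$ as well since Condition~\ref{prop_opt_cond} is valid for S/SC/I. With that observation in place, both statements of the corollary follow simultaneously.
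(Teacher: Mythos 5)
Your proposal is correct and matches the paper's own argument: the paper likewise obtains the corollary by combining the $O(n \log m)$ bound of the decomposition algorithm of \cite{Vidal2018} for $(a,b)$-Q/NC/C and $(a,b)$-Q/NC/I with Corollary~\ref{col_complex}. Your extra remarks on why Condition~\ref{prop_opt_cond} is available for NC (via LC and SC) and on the integer case are consistent with what the paper establishes in Section~\ref{sec_equiv}.
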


The algorithm in \cite{Vidal2018} attains the $O(n \log m)$ time complexity by utilizing the linear-time algorithms for $(a,b)$-Q/Box/$\gamma$ to solve the subproblems. As mentioned in Section~\ref{sec_box}, these are not the fastest algorithms for these subproblems. As a consequence, it can be expected that using, e.g., variable-fixing algorithms \cite{Kiwiel2008b} for the subproblems significantly improves the overall execution time of the algorithm.

It has been shown \cite{Wu2019,SchootUiterkamp2020b} that infeasibility-guided algorithms such as in \cite{vdKlauw2017,Wu2019} are significantly faster than the decomposition algorithm in \cite{Vidal2018}. These algorithms first compute a solution to S/NC/$\gamma$ without the nested constraints~(\ref{NC_03}) and, based on which nested constraint is violated most in this solution, subsequently divide the problem into two smaller instances of this problem. Analogously to the variable-fixing algorithms for $(a,b)$-Q/Box/C, the maximum number of divisions is $O(n)$, which results in a worst-case time complexity of $O(n^2 \log \frac{nR}{\epsilon})$ for S/NC/C \cite{vdKlauw2017} and $\Theta(n^2 \log \frac{R}{n})$ for S/NC/I \cite{Wu2019}. However, this worst-case complexity occurs only in pathological cases where each nested constraint is tight in an optimal solution, whereas it can be expected that the number of tight constraints is relatively small in practice. In particular, for the case with only upper nested constraints~(\ref{NC_03}), lower single-variable bounds~(\ref{NC_04}), and randomly generated problem parameters, it is shown in \cite{Vidal2016} that the expected number of tight constraints in an optimal solution to $(a,b,f)$-S/NC/C is $O(\log n)$.

An alternative algorithm for $(a,b)$-Q/NC/C that attains the same time complexity as \cite{Vidal2018} for $m=n$ is given in \cite{SchootUiterkamp2020b}. This algorithm is similar to the decomposition algorithm of \cite{Vidal2018} in the sense that it solves a (slightly different) sequence of $(a,b)$-Q/Box/C subproblems where the single-variable bounds for each subproblem are optimal solutions to previous subproblems. However, this algorithm avoids the time-consuming explicit computation of solutions to subproblems by exploiting the properties of a specific breakpoint searching algorithm for $(a,b)$-Q/Box/C and computing only the optimal Lagrange multiplier of each subproblem. As a consequence, this algorithm is shown to be one order of magnitude faster than the decomposition algorithm of \cite{Vidal2018}, while attaining the same worst-case time complexity of $O(n \log n)$ for $m = O(n)$.

Recently, for the problem $(a,b)$-Q/NC/C with only upper nested constraints, \cite{Wright2020} shows that a specialized interior-point method is able to outperform the decomposition-based approach in \cite{Vidal2016}, which is similar to the approach in \cite{Vidal2018}, when the ratio $\frac{m}{n}$ is larger than 0.1. Analogously to \cite{Wright2014} as mentioned in Section~\ref{sec_box}, this method exploits the constraint structure of S/NC/C to compute search directions in $O(n)$ time instead of $O(n^3)$ time. Although the authors in \cite{Wright2020} consider only upper nested constraints, it is straight-forward to generalize their results to problems involving also lower nested constraints \cite{Slager2019}.

Interestingly, \cite{Vidal2016,Akhil2018} shows that we can solve the problem $(a,b,f)$-S/NC/C with only nested upper constraints and without the box constraints~(\ref{NC_04}) in $O(n)$ time. More precisely, they show that this problem can be reduced to the problem of finding a concave cover of~$n$ points in $\mathbb{R}^2$ and give an $O(n)$ time algorithm to find this cover. This algorithm is very similar to the recursive-smoothing algorithm mentioned in Section~\ref{sec_intro} that is used to solve the vessel speed optimization problem \cite{Norstad2011} and processor scheduling problem with agreeable deadlines \cite{Huang2009}.

\subsection{$\alpha$/LC/$\gamma$: Optimization over laminar constraints}
Let $\mathcal{N}_1,\ldots,\mathcal{N}_m$ be subsets of $\mathcal{N}$ that satisfy the following property: if $\mathcal{N}_j \cap \mathcal{N}_{\ell} \neq \emptyset$, then either $\mathcal{N}_j \subset \mathcal{N}_{\ell}$ or $\mathcal{N}_j \supset \mathcal{N}_{\ell}$ for all $j,\ell \in \mathcal{M}$. We formulate the resource allocation with laminar constraints, $(a,b,f)$-S/LC/$\gamma$, as follows:
\begin{align}
(a,b,f)\text{-S/LC/$\gamma$} \ : \ \min_{x} \ & \sum_{i \in \mathcal{N}} a_i f \left(\frac{x_i}{a_i} + b_i \right) \nonumber \\
\text{s.t } & \sum_{i \in \mathcal{N}} x_i = R, \nonumber \\
 & L_j \leq \sum_{i \in \mathcal{N}_j} x_i \leq U_j, \quad j \in \mathcal{M}, \label{LC_03} \\
& l_i \leq x_i \leq u_i, \quad i \in \mathcal{N}, \nonumber \\
& x \in      \begin{cases} 
        \mathbb{R}^n &  \text{if } \gamma = \text{C},\\
        \mathbb{Z}^n   & \text{if } \gamma = \text{I}. \\
    \end{cases} \nonumber
\end{align} 
Similarly to S/NC/$\gamma$, the problem S/LC/$\gamma$ has been studied mainly with only the upper laminar constraints in~(\ref{LC_03}). The algorithms with the lowest computational complexities for these problems are given by \cite{Hochbaum1994} and have time complexities of $O(n \log n \log \frac{nR}{\epsilon})$ for $\gamma = \text{C}$ and $O(n \log n \log \frac{R}{n})$ for $\gamma = \text{I}$. For the general problem S/LC/$\gamma$, we obtain an efficient algorithm by combining results on the complexity of general separable convex optimization problems with linear constraints \cite{Hochbaum1990} and of the problem S/LC/C with a linear objective function \cite{Orlin2013}. More precisely, the time complexities of S/LC/C and S/LC/I are $O(P_{\text{linear}}(8n^2,m)\log \frac{Rn}{\epsilon})$ and $O(P_{\text{linear}}( 4n^2,m) \log \frac{Rm}{n})$ respectively, where $P_{\text{linear}}(n,m)$ is the time complexity of solving an instance of S/LC/C with a linear objective function \cite{Hochbaum1990}. The latter problem can be solved in $O(n \log n)$ time using the algorithm in \cite{Orlin2013}, hence we obtain a time complexity of $O(n^2 \log n \log \frac{Rn}{\epsilon})$ and $O(n^2 \log n \log \frac{Rm}{n})$ for S/LC/C and S/LC/I respectively.

With regard to the quadratic version of the problem, the special case of $(a,b)$-Q/LC/C with only upper laminar constraints can be solved in $O(n \log n)$ time \cite{Hochbaum1995}. This is done by reducing the problem to an instance of $(a,b)$-Q/NC/C with only upper nested constraints, which can be solved in $O(n \log n)$ time \cite{Hochbaum1995}. The general version of $(a,b)$-Q/NC/C with both lower and upper laminar constraints can be solved in $O(n^2)$ time as an instance of the quadratic convex cost flow problem on a tree network \cite{Tamir1993}. Finally, the integer-valued problem $(a,b)$-Q/NC/I can be solved in $O(n^2)$ time by first computing a solution to the continuous version of this problem and subsequently using a specific rounding procedure to obtain the optimal integer solution from this continuous solution \cite{Moriguchi2011}. By Corollary~\ref{col_complex}, this yields the following worst-case time complexities for $(a,b,f)$-S/LC/C and $(a,b,f)$-S/LC/I:

\begin{corollary}
Problem $(a,b,f)$-S/LC/$\gamma$ can be solved in $O(n^2)$ time. The special case $(a,b)$-Q/LC/C with only upper laminar constraints can be solved in $O(n \log n)$ time.
\end{corollary}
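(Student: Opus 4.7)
The plan is to combine our reduction result (Corollary~\ref{col_complex}) with existing worst-case complexity results for the quadratic version of the problem. For the first claim, Corollary~\ref{col_complex} tells us that $(a,b,f)$-S/LC/$\gamma$ and $(a,b)$-Q/LC/$\gamma$ have identical worst-case complexities, so it suffices to bound the latter in each of the two settings $\gamma \in \lbrace \mathrm{C}, \mathrm{I} \rbrace$.

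For the continuous case $\gamma = \mathrm{C}$, I would exploit the fact that the laminar family $\mathcal{N}_1, \ldots, \mathcal{N}_m$ naturally induces a rooted tree, in which each subset becomes an internal node and each activity $i \in \mathcal{N}$ a leaf. Under this identification, $(a,b)$-Q/LC/C is precisely a quadratic convex cost flow problem on a tree network, for which \cite{Tamir1993} gives an $O(n^2)$ algorithm. For the integer case $\gamma = \mathrm{I}$, I would invoke the two-step procedure of \cite{Moriguchi2011}: first solve the continuous relaxation in $O(n^2)$ via Tamir's algorithm, then apply their proximity-based rounding scheme, which converts a continuous optimum to the integer optimum in $O(n^2)$. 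Combined with Corollary~\ref{col_complex}, this yields the desired $O(n^2)$ bound for $(a,b,f)$-S/LC/$\gamma$.

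For the second claim, which is stated only for the quadratic version, the plan is to cite the reduction of \cite{Hochbaum1995}: any instance of $(a,b)$-Q/LC/C involving only upper laminar constraints can be transformed in $O(n)$ time into an instance of $(a,b)$-Q/NC/C with only upper nested constraints on $O(n)$ variables, which the same work solves in $O(n \log n)$ time. By Corollary~\ref{col_complex}, this in fact also extends to the $(a,b,f)$-S version of this special case, although the corollary as stated only records the quadratic bound.

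Since all the needed algorithms already exist in the literature, the hard part will not be any new algorithmic idea but rather verifying that the parameterized objective of $(a,b)$-Q fits the input format assumed in \cite{Tamir1993,Hochbaum1995,Moriguchi2011}. Expanding $\tfrac{1}{2} a_i \bigl( \tfrac{x_i}{a_i} + b_i \bigr)^2 = \tfrac{x_i^2}{2 a_i} + b_i x_i + \text{const}$ shows it is a standard separable strictly convex quadratic, so each cited algorithm applies verbatim, and no additional technical machinery is required beyond the reduction already established in Corollary~\ref{col_complex}.
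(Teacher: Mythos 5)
Your proposal is correct and follows essentially the same route as the paper: the continuous case via Tamir's $O(n^2)$ quadratic convex cost flow algorithm on the tree induced by the laminar family, the integer case via Moriguchi et al.'s proximity-based rounding, the upper-constraints-only case via Hochbaum and Hong's reduction to nested constraints, all combined with Corollary~\ref{col_complex}. Your added check that $\tfrac{1}{2} a_i ( \tfrac{x_i}{a_i} + b_i )^2 = \tfrac{x_i^2}{2 a_i} + b_i x_i + \text{const}$ is a standard separable quadratic is a sensible (if routine) verification that the paper leaves implicit.
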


As far as we are aware, the problem S/LC/$\gamma$ has been studied primarily from an academic point of view in the literature, i.e., little attention is paid to possible applications. One relevant application that has received quite some importance in the past years is the scheduling of the (dis)charging of an electrical storage system within a smart grid (see also Section~\ref{sec_storage}) where the energy can be drawn from each of the three phases within the low-voltage distribution network (see also \cite{SchootUiterkamp2019a}). The resulting problem is an instance of S/LC/C where the feasible set is the intersection of nested constraints (to model the storage capacity limits) and generalized upper bound constraints (to model the charging limits). We plan to investigate this topic further in future research.

\subsection{$\alpha$/SC/$\gamma$: Optimization over submodular constraints}
Given a submodular function $r$ over the ground set $\mathcal{N}$, the $(a,b,f)$-separable resource allocation over submodular constraints can be formulated as follows:
\begin{align}
(a,b,f)\text{-S/SC/}\gamma \ : \ \min_x \ & \sum_{i \in \mathcal{N}} a_i f \left(\frac{x_i}{a_i} + b_i \right) \nonumber \\
\text{s.t } & \sum_{i \in \mathcal{N}} x_i = r(\mathcal{N}), \nonumber \\
&  \sum_{i \in \mathcal{S}} x_i \leq r(\mathcal{S}), \quad \mathcal{S} \subset \mathcal{N}, \label{P_SC_03} \\
& x \in      \begin{cases} 
        \mathbb{R}^n &  \text{if } \gamma = \text{C},\\
        \mathbb{Z}^n   & \text{if } \gamma = \text{I}. \\
    \end{cases} \nonumber
\end{align}

For this problem, one can find two classes of algorithms in the literature. The first class consists of decomposition algorithms that first compute a solution to the problem without the submodular constraints~(\ref{P_SC_03}) and, based on which constraints are violated by this solution, split up the problem into two smaller instances of S/SC/$\gamma$ \cite{Fujishige1980,Groenevelt1991}. Note that the infeasibility-guided algorithms for S/NC/$\gamma$ as discussed in Section~\ref{sec_NC} are based on the same principle. The best worst-case time complexities of such algorithms are $O(n^2 \log \frac{n r(\mathcal{N})}{\epsilon} + n \cdot EO)$ for S/SC/C \cite{Nagano2012} and $O(n^2( \log \frac{r(\mathcal{N})}{n} +  F \log r(\mathcal{N})) + n \tilde{F})$ for S/SC/I \cite{Katoh2013}, where EO is the time required to minimize a given submodular function and $F$ is the time required to check the feasibility of a given vector for the submodular constraints. Moreover, $\tilde{F}$ is the time required to determine for a given solution $x$ that is feasible for the submodular constraints~(\ref{P_SC_03}) by how much we can increase a given variable $x_i$ without violating any of these submodular constraints. For the quadratic problems $(a,b)$-Q/SC/$\gamma$, these complexities reduce to $O(n^2 + n \cdot \text{EO})$ for $(a,b)$-Q/SC/C and to $O(n^2 F \log r(\mathcal{N}) + n\tilde{F})$ for $(a,b)$-Q/SC/I. By Corollary~\ref{col_same}, these are also the complexities for solving the problems $(a,b,f)$-S/SC/C and $(a,b,f)$-S/SC/I using decomposition algorithms.

The second class consists of greedy algorithms that solve the integer version S/SC/I by incrementally building an optimal solution (see, e.g., \cite{Hochbaum1994, Moriguchi2004}). However, instead of incrementing the total amount of allocated resource by unit steps, these algorithms apply a scaling procedure to determine larger step sizes that speed up the building process while still maintaining feasibility of the current solution. To solve the continuous version S/SC/C, these algorithms exploit a proximity result between optimal solutions of S/SC/C and S/SC/I (see, e.g., \cite{Moriguchi2011}) that states that for any optimal solution $x^*$ to S/SC/I there exists an optimal solution $\tilde{x}$ to S/SC/C such that $|\tilde{x}_i - x^*_i| \leq n-1$. As a consequence, to solve S/SC/C with an given accuracy $\epsilon$, one can scale all problem parameters by a factor $ \lceil \frac{n}{\epsilon} \rceil$, solve the scaled problem with integer variables using the greedy algorithm, and scale back the resulting solution. The most efficient algorithms of this class run in $O(n (\log n + \tilde{F}) \log \frac{r(\mathcal{N})}{\epsilon n})$ time for S/SC/C and $O(n (\log n + \tilde{F}) \log \frac{r(\mathcal{N})}{n})$ for S/SC/I \cite{Hochbaum1994,Moriguchi2004}, which unfortunately cannot be improved for the quadratic cases $(a,b)$-Q/SC/C and $(a,b)$-Q/SC/I.

One relevant special case of $(a,b)$-Q/SC/C is the problem of computing the minimum-norm point of a base polytope (see, e.g., \cite{Fujishige2005}). This problem is equivalent to $(\bar{e},0)$-Q/SC/C and plays an important role as a subroutine in several algorithms for machine learning problems and submodular function minimization \cite{Fujishige2011, Bach2013}. One of the most popular algorithms in practice for finding the minimum-norm point is Wolfe's algorithm \cite{Wolfe1976}, which solves the problem by iteratively updating a hyperplane and the minimum-norm point on this hyperplane based on the feasibility of this point. The authors in \cite{Chakrabarty2014} show that this algorithm computes an $\epsilon$-approximate solution to $(\bar{e},0)$-Q/SC/C in $O(\frac{nM^2}{\epsilon})$ time, where $M$ is the norm of the \emph{maximum}-norm point. Although there are algorithms for finding the minimum-norm point that have a better computational complexity, e.g., the aforementioned decomposition and greedy algorithms, Wolfe's algorithm has been shown to be among the fastest algorithms in practice \cite{Fujishige2011, Bach2013}.

\section{Impact on applications}
\label{sec_appl}

The goal of this section is to show the relevance of $(a,b,f)$-separable resource allocation problems in applications. As we discussed in the previous section, our newly derived complexity results might not directly lead to practical faster algorithms for these problems. However, for a number of applications from the domains of telecommunications, statistics, and energy management, we show that our reduction result lead to new insights into common practices in these fields. In particular, we show that two problems in the area of vessel routing and processor scheduling can be solved in $O(n \log n)$ time rather than $O(n^2)$ time, which was the previously known best complexity for these problems. Finally, with this collection of applications and the included references, we intent to stimulate cross-disciplinary research that leads to new structural results and algorithms for RAPs that are applicable to many different research fields.

\subsection{Power allocation in multi-channel communication systems}

In many telecommunication systems, data can be transmitted over several parallel channels to reduce the amount of noise experienced when transmitting the data (see, e.g., \cite{Shams2014}). The amount of data that can be transmitted through a given channel~$i$, i.e., the channel capacity, depends on the power $x_i$ spent on this channel, its bandwidth~$B_i$, and a ``gain'' parameter~$c_i$ that represents the amount of noise on the channel. One goal in these systems is to allocate a given budget of total power $P^{\text{tot}}$ over a set $\mathcal{N}$ of~$n$ channels such that the overall channel capacity is maximized while respecting power limits on each channel. This problem can be formulated mathematically as
\begin{align*}
\text{(P)} \ : \ \max_{x \in \mathbb{R}^n} \ & \sum_{i \in \mathcal{N}} B_i \log (1 + c_i x_i) \\
\text{s.t. } & \sum_{i \in \mathcal{N}} x_i = P^{\text{tot}}, \\
& 0 \leq x_i \leq \bar{P}_i, \quad i \in \mathcal{N},
\end{align*}
where $\bar{P}_i$ is the maximum allowed power on channel~$i$.

Note that for a given channel~$i \in \mathcal{N}$ we have
\begin{equation*}
B_i \log (1 + c_i x_i)
=
B_i \log  \left( \frac{\frac{1}{c_i} + x_i}{B_i} \cdot B_i c_i \right)
=
%B_i \log \left(\frac{1}{c_i} + x_i\right) + B_i \log (c_i)
%=
%B_i \log \left( \frac{\frac{1}{c_i} + x_i}{B_i} \right)
%+ B_i \log (B_i) + B_i \log (c_i).
B_i \log \left( \frac{\frac{1}{c_i} + x_i}{B_i} \right)
+ B_i \log(B_i c_i).
\end{equation*}
Since the second term $B_i \log(B_i c_i)$ in the above expression is constant, we can replace the objective function of Problem~(P) by $\sum_{i \in \mathcal{N}} B_i \log \left( \frac{\frac{1}{c_i} + x_i}{B_i} \right)$ without changing the optimal solution to the problem. The resulting problem is an instance of $(B,\bar{B},f)$-S/Box/C with $B := (B_i)_{i \in \mathcal{N}}$, $\bar{B} := (\frac{1}{B_i c_i})_{i \in \mathcal{N}}$, and $f(x_i) := -\log(x_i)$. Thus, by Corollaries~\ref{col_same} and~\ref{col_box_complexity}, we can solve this problem as an instance of $(B,\bar{B})$-Q/Box/C in $O(n)$ time, i.e., we can replace each term $B_i \log (1 + c_i x_i)$ by $\frac{x_i^2}{B_i} + \frac{x_i}{B_i c_i}$. Note that this is more efficient than several existing approaches for solving Problem~(P) that claim a linear time complexity (see, e.g., \cite{Ling2012,Khakurel2014}). The reason for this is that these algorithms achieve this complexity only if the gain parameter~$c$ has already been sorted, which is however only the case for some specific communication systems (see, e.g., \cite{Palomar2005}).

Another common objective for the channel power allocation problem~(P) (see, e.g., \cite{Xing2020}) is to minimize the mean square error between different channels from a set~$\mathcal{N}$. This objective is given by
\begin{equation*}
\min_{x \in \mathbb{R}^n} \ \sum_{i \in \mathcal{N}}  \frac{w_i}{A_i x_i + D_i},
\end{equation*}
where $w_i$, $a_i$, and $b_i$ are positive parameters for each $i \in \mathcal{N}$. This objective function is $(a,b,f)$-separable by choosing $a_i := \sqrt{\frac{w_i}{A_i}}$ and $b_i := \frac{D_i}{\sqrt{w_i A_i}}$ for each $i \in \mathcal{N}$ and $f(x_i) := \frac{1}{x_i}$:
\begin{equation*}
a_i f \left(\frac{x_i}{a_i} + b_i \right)
=
\sqrt{\frac{w_i}{A_i}} \frac{1}{x_i \sqrt{\frac{A_i}{w_i}} + \frac{D_i}{\sqrt{w_i A_i}}}
=
\frac{w_i}{\sqrt{A_i}} \frac{1}{x_i \sqrt{A_i} + \frac{D_i}{\sqrt{A_i}}}
=
\frac{w_i}{A_i x_i + D_i}.
\end{equation*}
Moreover, several variations of the channel power allocation problem have been studied with, e.g., bounds on disjoint or nested subsets of allocations (see, e.g., \cite{He2013} and \cite{DAmico2014} respectively). Analogously to Problem~(P), one can show that these problems are instances of $(a,b,f)$-S/GBC/C and $(a,b,f)$-S/NC/C  and thus can be solved as instances of $(a,b)$-Q/GBC/C and $(a,b)$-Q/NC/C respectively.

\subsection{Storage operation in energy systems}
\label{sec_storage}

Storage systems are becoming a crucial part of current and future sustainable energy systems (see, e.g., \cite{Roberts2011, Lund2016,Zame2018}). Such systems support satisfying the energy demand of, e.g., a neighborhood, when renewable energy sources such as solar and wind are insufficient due to, e.g., unfavorable weather conditions. Commonly, the operation of the storage systems is done in a way that the stress on the overall grid is reduced as much as possible. Determining for a given time horizon the best operational schedule for the storage, i.e., how much energy should be (dis)charged at each moment to reach the overall goal in the best way, leads to an optimization problem. In this problem, we divide the overall time horizon into $n$ equidistant time intervals of length $\Delta t$ indexed by the set $\mathcal{N} := \lbrace 1,\ldots,n \rbrace$ and determine for each interval $i \in \mathcal{N}$ the (dis)charging power $x_i$ during this interval. This amount is limited by the minimum and maximum charging rates $X_{\min}$ and $X_{\max}$. Moreover, the charging must be done such that the storage capacity $D$ is not exceeded. Given the initial amount of energy $S_{\text{start}}$ in the storage and a desired target amount $S_{\text{end}}$ at the end of the horizon, the storage operation problem can be formulated as follows (see also \cite{vdKlauw2017}):
\begin{align*}
\text{(B)} \ : \ \min_{x \in \mathbb{R}^n} \ & \sum_{i \in \mathcal{N}} \phi_i (x_i) \\
\text{s.t. } &0 \leq S_{\text{start}} + \Delta t \sum_{i=1}^j x_i \leq D, \quad j \in \mathcal{N} \backslash \lbrace n \rbrace, \\
& S_{\text{start}} + \Delta t \sum_{i \in \mathcal{N}} x_i = S_{\text{end}}, \\
& X_{\min} \leq x_i \leq X_{\max}, \quad i \in \mathcal{N},
\end{align*}
where the functions $\phi_i$ represent the desired grid objective. Note that if each function $\phi_i$ is convex, which is in general the case in this problem setting, this problem is an instance of S/NC/C.

Three commonly seen objectives that are used to reduce grid stress and congestion are: minimal import and export of energy from the main grid (also known as energy-autarky, see, e.g., \cite{Muller2011}), load profile flattening (see, e.g., \cite{Gerards2015}), and minimizing peak consumption (see, e.g., \cite{Uddin2018}). One way to model the latter case is to set a maximum level $M$ for the overall power consumption of the neighborhood. Given the power consumption $p := (p_i)_{i \in \mathcal{N}}$ of the neighborhood, we can model these objectives as follows:
\begin{align*}
\text{Minimizing exchange with main grid: } & \phi_i (x_i) = |x_i + p_i |; \\
\text{Load profile flattening: } & \phi_i (x_i) = (x_i + p_i)^2; \\
\text{Threshold peak shaving: } & \phi_i (x_i) = \begin{cases} 0 & \text{if } x_i + p_i \leq M, \\
\underline{f}(x_i + p_i) & \text{if } x_i + p_i > M, \end{cases}
\end{align*}
where $\underline{f}$ is a convex non-decreasing function with $\underline{f}(M) = 0$. Note that for the objective of load profile flattening, Problem~(B) is an instance of $(\bar{e},p)$-Q/NC/C, where $\bar{e}$ is the vector of ones. Moreover, for the other two objectives, Problem~(B) is an instance of $(\bar{e},p,f)$-S/NC/C where $f$ is the absolute value function or the piecewise function
\begin{equation*}
f (y) = \begin{cases}
0 & \text{if } y \leq M, \\
\underline{f}(y) & \text{if } y > M.
\end{cases}
\end{equation*}
It follows by Corollary~\ref{col_same} that the optimal solution to $(\bar{e},p)$-Q/NC/C is also optimal for $(\bar{e},p,f)$-S/NC/C for these two functions. This implies that we can schedule the storage (dis)charging such that all three objectives are satisfied simultaneously by aiming for load profile flattening. This is an effect that can also be observed for other renewable energy systems such as photovoltaic (solar panel) systems and electric vehicle charging (see, e.g., \cite{Munkhammar2013}) and heat pumps (see, e.g., \cite{Vanhoudt2014}). Moreover, energy tariff systems that employ piecewise linear cost functions have been shown to be able to flatten the load profile, i.e., the objective modeled by a quadratic cost function (see, e.g.,  \cite{Reijnders2020}). Since such tariff systems are simpler to explain to end users, they are more likely to be accepted than systems using quadratic cost functions while still achieving the desired objective of load profile flattening.

\subsection{Stratified sampling}

Stratified sampling is a sampling method suitable for situations where it is likely that a random sample is not a proper representation of the population \cite{Neyman1934}. Such a situation occurs, e.g., when several subclasses of the population score extremely on the to-be-estimated characteristic. To deal with this specific case, we partition the given population into~$n$ so-called \emph{strata} with sizes $N_1,\ldots, N_n$ that, ideally, represent the aforementioned subclasses. Given the desired overall sample size~$R$, the goal is to determine for each stratum~$i \in \mathcal{N} := \lbrace 1,\ldots,n \rbrace$ the number of samples $x_i$ drawn from this stratum while minimizing the variance of the given characteristic. Following the formulation in \cite{Friedrich2015}, the optimal sample allocation is the solution of the following optimization problem:
\begin{align*}
\min_{x \in \mathbb{Z}^n} \ & \sum_{i \in \mathcal{N}} \left(\frac{N_i^2 S_i^2}{x_i} - N_i S_i^2 \right) \\
\text{s.t. } & \sum_{i \in \mathcal{N}} x_i = R, \\
& 0 \leq x_i \leq N_i, \quad i \in \mathcal{N},
\end{align*}
where $S_i^2$ is the variance of the characteristic within stratum~$i$. Similarly to \cite{Friedrich2015}, the sample bounds of $0$ and $N_i$ can be chosen differently to ensure a minimum or maximum number of samples drawn from a given stratum.

Let $D \in \mathbb{R}^n$ be a vector with $D_i := N_i^2 S_i^2$ for all $i \in \mathcal{N}$. Then the above problem is an instance of the problem $(D,0,f)$-S/Box/I with $f(x_i) = \frac{1}{x_i}$. Thus, by Corollary~\ref{col_box_complexity}, we can solve this problem as an instance of $(D,0)$-Q/Box/I in $O(n)$ time. Note that, in contrast to the approaches in, e.g., \cite{Friedrich2015}, this complexity depends only on the number~$n$ of strata and not on the actual strata sizes $N_1,\ldots,N_n$ or desired sample size~$R$. As a consequence, our reduction result yields a promising approach to determine optimal sample sizes in large datasets, which can contain billions of samples (see, e.g., \cite{Meng2013}).

\subsection{Vessel speed optimization}

A recent trend in ship routing is to actively manage the ship's sailing speed to reduce fuel costs and carbon emissions \cite{Psaraftis2014}. As a consequence, when determining the routes of a fleet of ships to deliver cargo within given timing constraints, one must be able to determine the minimum cost of having a ship sail a given route. This problem is known as the vessel speed optimization problem (see, e.g., \cite{Norstad2011,Hvattum2013}). In this problem, we are given a route between $n+1$ ports starting at port~0 at time $t^{\text{start}}$ and required to finish at time $t^{\text{end}}$ at port~$n$. The distance between consecutive ports~$i-1$ and~$i$ is given by $d_i$ and each port~$i$ must be serviced by the ship within a given time window $[A_i, D_i]$. The goal is to determine for each leg~$i \in \mathcal{N} := \lbrace 1,\ldots,n \rbrace$ of the tour, i.e., for each distance $d_i$, a speed $v_i$ such that the fuel cost of sailing at these speeds is minimized. Following \cite{Norstad2011,Hvattum2013}, we formulate this problem as follows:
\begin{align*}
\text{(V)} \ : \ \min_{t \in \mathbb{R}^{n+1}, \ v \in \mathbb{R}^n} \ &
\sum_{i \in \mathcal{N}} d_i c(v_i) \\
\text{s.t. }  & t_i + \frac{d_i}{v_i} = t_{i+1}, \quad i \in \lbrace 0 \rbrace \cup \mathcal{N} \backslash \lbrace n \rbrace, \\
& A_i \leq t_i \leq D_i, \quad i \in \mathcal{N} \backslash \lbrace n \rbrace, \\
& t_0 = t^{\text{start}}, \ t_n = t^{\text{end}}, \\
& v^{\min} \leq v_i \leq v^{\max}, \quad i \in  \mathcal{N}.
\end{align*}
Here, $v^{\min}$ and $v^{\max}$ are the minimum and maximum cruising speeds and $c$ is a non-decreasing convex function that models the relation between sailing speed and fuel costs per unit distance.

From this formulation, it follows by induction on $i$ that $t_i = t^{\text{start}} + \sum_{k=1}^i \frac{d_k}{v_k}$ for all $i \in \mathcal{N}$ and that $t^{\text{start}} + \sum_{k=1}^n \frac{d_k}{v_k} = t^{\text{end}}$. Let $x_i := \frac{d_i}{v_i}$ and $q(x_i) := c(1/x_i)$. It follows that
\begin{equation*}
d_i c(v_i) = d_i c \left( \frac{d_i}{x_i} \right)
= d_i q \left( \frac{x_i}{d_i} \right).
\end{equation*}
Note that $q$ is convex since $c$ is non-decreasing. This means that Problem~(V) is equivalent to the following convex optimization problem:
\begin{align*}
\min_{x \in \mathbb{R}^n} \ & \sum_{i \in \mathcal{N}} d_i q \left( \frac{x_i}{d_i} \right) \\
\text{s.t. } & A_i - t^{\text{start}} \leq \sum_{k=1}^i x_i \leq D_i - t^{\text{start}}, \quad i \in \mathcal{N} \backslash \lbrace n \rbrace, \\
& \sum_{i \in \mathcal{N}} x_i = t^{\text{end}} - t^{\text{start}}, \\
& \frac{d_i}{v^{\max}} \leq x_i \leq \frac{d_i}{v^{\min}}, \quad i \in \mathcal{N}.
\end{align*}
This problem is an instance of $(d,0,q)$-S/NC/C. Hence, by Corollary~\ref{col_NC}, this problem and thus Problem~(V) can be solved in $O(n \log n)$ time by, e.g., the fast algorithm in \cite{SchootUiterkamp2020b}. This result is relevant since Problem~(V) often occurs as a subproblem in fleet routing algorithms \cite{Psaraftis2014} and thus using a faster algorithm for this subproblem can lead to significant speed-ups for the overall algorithm.

\subsection{Speed scaling}

Efficient energy usage is an important topic within the development of computing systems \cite{Zhuravlev2013}. To reduce energy consumption, modern computer processors can adjust their speed to save energy while still meeting their performance constraints. This leads to scheduling problems where a set of tasks needs to be scheduled and processor speeds need to be chosen such that all tasks are executed before their deadline (see \cite{Gerards2016aa} for a survey). One special case of these types of scheduling problems is the case where the deadlines are agreeable, i.e., deadlines are ordered according to the arrival times of the tasks (see also \cite{Bampis2012}). In this problem, we are given~$n$ tasks indexed by the set $\mathcal{N}$ that must be processed on a single processor. Each task~$i \in \mathcal{N}$ has an arrival time $A_i$, deadline $D_i$, and amount of work $w_i$ that can be interpreted as the amount of operations and calculations the processor must execute to perform this task. The goal is to select for each task~$i$ an execution speed~$s_i$ and starting time~$B_i$ such that each task is processed before its deadline and the total energy usage of the processor is minimized.

Since the deadlines are agreeable we have that $D_i \leq D_k$ if $A_i \leq A_k$ for any two tasks~$i,k \in \mathcal{N}$. Moreover, in an optimal schedule, the tasks can be scheduled in non-decreasing order of their deadlines \cite{Bampis2012}. This means that we can formulate this speed scaling problem as follows (see also \cite[Chapter~4]{Gerards2014}:
\begin{align*}
\text{(S)} \ : \ \min_{s \in \mathbb{R}^n \ B \in \mathbb{R}^n} \ &
\sum_{i \in \mathcal{N}} p(s_i) \frac{w_i}{s_i} \\
\text{s.t. } & B_i + \frac{w_i}{s_i} \leq D_i, \quad i \in \mathcal{N}, \\
& B_i \geq A_i, \quad i \in \lbrace 1,\ldots,n \rbrace, \\
& B_i + \frac{w_i}{s_i} \leq B_{i+1}, \quad i \in \mathcal{N} \backslash \lbrace n \rbrace, \\
& 0 < s_i \leq s^{\max}, \quad i \in \mathcal{N},
\end{align*}
where $s^{\max}$ is the maximum processor speed and $p$ is a convex function that models the relation between processor speed and its energy usage. Note that we can impose a nonzero lower bound on each $s_i$ so that the feasible set of this problem is guaranteed to be closed. Since we must choose the speeds such that each task can be executed in the maximum time that is available for it, we have that $\frac{w_i}{s_i} \leq D_i - A_i$. This yields a lower bound on $s_i$ of $\frac{w_i}{D_i - A_i}$ that is nonzero since $w_i > 0$ and $D_i > A_i$. 

If the processor is active until the latest deadline regardless of the scheduling of the tasks, then there exists an optimal schedule with no idle time \cite{Irani2007}. This means that we can add without loss of generality the constraint $B_i = \sum_{k=1}^i \frac{w_i}{s_i}$ for all $i \in \mathcal{N}$ to the formulation of Problem~(S). Let $x_i := \frac{w_i}{s_i}$ for all $i \in \mathcal{N}$ and $q(x_i) := x_i p(1/x_i)$ (note that $q$ is convex). It follows that
\begin{equation*}
p(s_i) \frac{w_i}{s_i} = p\left(\frac{w_i}{x_i} \right) x_i = w_i q\left(\frac{x_i}{w_i} \right).
\end{equation*}
Using the lower bound on $s_i$, the added constraint on~$B_i$, the transformation $x_i = \frac{w_i}{s_i}$, and the function~$q$, we can reformulate Problem~(S) to
\begin{align*}
\min_{x \in \mathbb{R}^n} \ &
\sum_{i \in \mathcal{N}} w_i q \left(\frac{x_i}{w_i} \right) \\
\text{s.t. } & A_{i+1} \leq \sum_{k=1}^i x_k \leq D_i, \quad i \in \mathcal{N} \backslash \lbrace n \rbrace, \\
& \sum_{i \in \mathcal{N}} x_i = D_n, \\
& \frac{w_i}{s^{\max}} \leq x_i \leq D_i - A_i, \quad i \in \mathcal{N}.
\end{align*}
This is an instance of $(w,0,q)$-S/NC/S. Hence, by Corollary~\ref{col_NC}, this problem and thus Problem~(S) can be solved in $O(n \log n)$ time. This result also leads to complexity improvements for speed scaling problems that can be reduced to Problem~(S), e.g., for the multi-core processor scheduling problem considered in \cite{Gerards2014a}.

Recently, \cite{Shioura2017} applied the equivalence result in \cite{Nagano2012} to improve the time complexity of several other speed scaling problems. Together with the result in this section, this suggests that there is a great potential for using the reduction result in this article to contribute to more efficient algorithms within this research field.

\section{Conclusions and outlook}
\label{sec_concl}

In this article, we studied the resource allocation problem (RAP) with additional submodular constraints. We proved that the class of RAPs whose objective function is $(a,b,f)$-separable can be solved efficiently as quadratic RAPs if a certain optimality condition of the general separable problem is satisfied. Using this reduction result, we derive new worst-case time complexity results on several relevant special cases of the studied problem. Moreover, we have shown the impact of our reduction result on several core problems in wireless communications, smart grids, statistics, routing, and processor management.

One major direction for future research is the extension of the reduction result to other problems. The most intuitive starting point for this is to search for other optimization problems that satisfy the required optimality condition. Promising candidates for this are problems that are variations on the RAPs studied in this article, e.g., RAPs with interval and cardinality constraints \cite{Sun2013,SchootUiterkamp2018b} and with additional nonseparable terms in the objective functions \cite{Romeijn2007,Sharkey2011,SchootUiterkamp2019a}.
Besides this more technical direction, in the light of a more cross-disciplinary approach towards the study of RAPs, it is worthwhile to identify more research fields and applications, next to the ones that we discussed in this article, where RAPs are being studied and where our results can have impact and lead to new insights.

\section*{Acknowledgments}
This research has been conducted within the SIMPS project (647.002.003) supported by NWO and Eneco.

\appendix

\section{Laminar constraints are a special case of submodular constraints}
\label{app_lemma_LC}
In this appendix, we show that laminar (or tree) constraints are a special case of submodular constraints. Recall that
\begin{itemize}
\item laminar constraints are of the form $L_j \leq \sum_{i \in \mathcal{N}_j} x_i \leq U_j$, $j \in \mathcal{M}$, where the subsets $\mathcal{N}_1,\ldots,\mathcal{N}_m$ of $\mathcal{N}$ have the property that either $\mathcal{N}_j \cap \mathcal{N}_{\ell} = \emptyset$, $\mathcal{N}_j \subset \mathcal{N}_{\ell}$, or $\mathcal{N}_j \supset \mathcal{N}_{\ell}$ for all $j,\ell \in \mathcal{M}$;
\item
submodular constraints are of the form $\sum_{i \in \mathcal{S}} x_i \leq r(\mathcal{S})$, $\mathcal{S} \subset \mathcal{N}$ and $\sum_{i \in \mathcal{N}} x_i = r(\mathcal{N})$, where $r$ is a submodular function.
\end{itemize}
For this, we use a result from \cite{Fujishige1984, Fujishige2005} on so-called \emph{cross-free} families of subsets. A family $\mathcal{F} \subseteq 2^{\mathcal{N}}$ is called cross-free if none of its elements cross, i.e., for any two subsets $\mathcal{X}, \mathcal{Y} \in \mathcal{F}$ we have that at least one of the sets $\mathcal{X} \cap \mathcal{Y}$, $\mathcal{X} \cap (\mathcal{N} \backslash \mathcal{Y})$, $(\mathcal{N} \backslash \mathcal{X}) \cap \mathcal{Y}$, or $(\mathcal{N} \backslash \mathcal{X}) \cap (\mathcal{N} \backslash \mathcal{Y})$ is empty. For a given cross-free family $\mathcal{F}$ containing $\emptyset$ and $\mathcal{N}$ and for any set function $r : \ \mathcal{F} \rightarrow \mathcal{R}$ with $r(\emptyset) = 0$, the set
\begin{equation*}
\mathcal{B}(\mathcal{F},r) := \left\{ x \in \mathbb{R}^n \ \middle| \ \sum_{i \in \mathcal{X}} x_i \leq r(\mathcal{X}) \ \forall \mathcal{X} \in \mathcal{F}, \ \sum_{i \in \mathcal{N}} x_i = r(\mathcal{N}) \right\}
\end{equation*}
is a base polyhedron \cite{Fujishige1984, Fujishige2005}. This means that there exists a submodular function $r'  : \ 2^{\mathcal{N}} \rightarrow \mathbb{R}$ such that 
\begin{equation*}
\mathcal{B}(\mathcal{F},r) = \left\{ x \in \mathbb{R}^n \ \middle| \ \sum_{i \in \mathcal{X}} x_i \leq r'(\mathcal{X}) \ \forall \mathcal{X} \subset \mathcal{N}, \ \sum_{i \in \mathcal{N}} x_i = r(\mathcal{N}) \right\} .
\end{equation*}
Thus, we can show that laminar constraints are a special case of submodular constraints if for a given feasible set $\mathcal{C}'$ determined by laminar constraints we can find a cross-free family $\mathcal{F}$ and a set function $r :  \mathcal{F} \rightarrow \mathbb{R}$ such that $\mathcal{B}(\mathcal{F},r) = \mathcal{C}'$.

For given laminar constraints $L_j \leq \sum_{i \in \mathcal{N}_j} x_i \leq U_j$, $j \in \mathcal{M}$ and a feasible set $\mathcal{C}' := \lbrace x \in \mathbb{R}^n \ | \ L_j \leq \sum_{i \in \mathcal{N}_j} \leq U_j$, $j \in \mathcal{M} \rbrace$, we define the following family of subsets of $\mathcal{N}$:
\begin{equation*}
\mathcal{N}' := \lbrace \mathcal{N}_j \ | \ j \in \mathcal{M} \rbrace \cup \lbrace \mathcal{N} \backslash \mathcal{N}_j \ | \ j \in \mathcal{M} \rbrace .
\end{equation*}
Note, that the feasible set $\mathcal{C}'$ is equal to $\mathcal{B}(\mathcal{N}',r')$, where $r' : \ \mathcal{N}' \rightarrow \mathbb{R}$ is a set function on $\mathcal{N}'$ given by
\begin{equation*}
r'(\mathcal{X}) := \begin{cases}
U_j & \text{if } X = \mathcal{N}_j \text{ for some } j \in \mathcal{M}, \\
R - L_j & \text{if } X = \mathcal{N} \backslash \mathcal{N}_j \text{ for some } j \in \mathcal{M}.
\end{cases}
\end{equation*}
We claim that $\mathcal{N}'$ is a cross-free family, which immediately implies that the set $\mathcal{B}(\mathcal{N}',r')$ is a base polyhedron and thus that laminar constraints are a special case of submodular constraints. For this, we consider for two different sets $\mathcal{X}, \mathcal{Y} \in \mathcal{N}'$ four cases:
\begin{enumerate}
\item
If $\mathcal{X} = \mathcal{N}_j$ and $\mathcal{Y} = \mathcal{N}_{\ell}$ for some $j, \ell \in \mathcal{M}$, then either $\mathcal{X} \cap \mathcal{Y}= \emptyset$, $\mathcal{X} \subset \mathcal{Y}$, or $\mathcal{X} \supset \mathcal{Y}$. The latter two cases imply that $\mathcal{X} \cap (\mathcal{N} \backslash \mathcal{Y}) = \empty$ and $(\mathcal{N} \backslash \mathcal{X}) \cap \mathcal{Y} = \emptyset$ respectively. Thus, in all three cases, $\mathcal{X}$ and $\mathcal{Y}$ do not cross.
\item
If $\mathcal{X} = \mathcal{N}_j$ and $\mathcal{Y} = \mathcal{N} \backslash \mathcal{N}_{\ell}$ for some $j, \ell \in \mathcal{M}$, then either $\mathcal{X} \cap (\mathcal{N} \backslash \mathcal{Y})= \emptyset$, $\mathcal{X} \subset (\mathcal{N} \backslash \mathcal{Y})$, or $\mathcal{X} \supset (\mathcal{N} \backslash \mathcal{Y})$. The latter two cases imply that $\mathcal{X} \cap \mathcal{Y} = \emptyset$ and $(\mathcal{N} \backslash \mathcal{X}) \cap (\mathcal{N} \backslash \mathcal{Y}) = \emptyset$ respectively. Thus, in all three cases, $\mathcal{X}$ and $\mathcal{Y}$ do not cross.

\item
If $\mathcal{X} = \mathcal{N} \backslash \mathcal{N}_j$ and $\mathcal{Y} = \mathcal{N}_{\ell}$ for some $j, \ell \in \mathcal{M}$, we can use the argument in case~2 with the roles of $\mathcal{X}$ and $\mathcal{Y}$ interchanged to conclude that $\mathcal{X}$ and $\mathcal{Y}$ do not cross.
\item
If $\mathcal{X} = \mathcal{N} \backslash \mathcal{N}_j$ and $\mathcal{Y} = \mathcal{N} \backslash \mathcal{N}_{\ell}$ for some $j, \ell \in \mathcal{M}$, then either $(\mathcal{N} \backslash \mathcal{X}) \cap (\mathcal{N} \backslash \mathcal{Y})= \emptyset$, $(\mathcal{N} \backslash \mathcal{X}) \subset (\mathcal{N} \backslash \mathcal{Y})$, or $(\mathcal{N} \backslash \mathcal{X}) \supset (\mathcal{N} \backslash \mathcal{Y})$. The latter two cases imply that $(\mathcal{N} \backslash \mathcal{X}) \cap \mathcal{Y} = \emptyset$ and $\mathcal{X} \cap (\mathcal{N} \backslash \mathcal{Y}) = \emptyset$ respectively. Thus, in all three cases, $\mathcal{X}$ and $\mathcal{Y}$ do not cross.
\end{enumerate}

\section{An alternative proof that Condition~\ref{prop_opt_cond} holds for the resource allocation problem with laminar constraints}
\label{sec_LC}
Here we present an alternative proof of the claim that Condition~\ref{prop_opt_cond} holds for the resource allocation problem with laminar constraints (S/LC/$\gamma$). Before we prove this result in Lemma~\ref{lemma_opt_laminar}, we first show that the difference between any two feasible solutions $x$ and $z$ of S/LC/$\gamma$ can be written as a nonnegative combination of vectors in $\mathcal{E}_{\mathcal{C}'}(x)$, where $\mathcal{C}'$ is the feasible set of S/LC/$\gamma$. In other words, $z-x$ belongs to the cone generated by the vectors in $\mathcal{E}_{\mathcal{C}'}(x)$. To this end, we present the following procedure to obtain this combination. Starting from the solution $\bar{x}^0 := x$, we construct a series of intermediate vectors $(\bar{x}^t)_{t \geq 0}$ that finally leads to $z$ by iteratively transferring amounts between two variables. We do this in such a way that the distance $\sum_{i \in \mathcal{N}} |z_i - \bar{x}^t_i|$ reduces as $t$ increases and becomes zero for some $\bar{t} \geq 0$, meaning that $\bar{x}^{\bar{t}} = z$. To ensure finiteness of this process, we always choose two variables with indices $i,k$ such that $\bar{x}^t_i > z_i$ and $\bar{x}^t_k < z_k$. By transferring an amount of $\lambda_{ik} := \min(\bar{x}^t_i - z_i, z_k - \bar{x}^t_k)$ between those variables, we have for the subsequent vector $\bar{x}^{t+1}$ that either $\bar{x}^{t+1}_i = z_i$ or $\bar{x}^{t+1}_k = z_k$. By repeating this process, we finally reach an intermediate vector $\bar{x}^{\bar{t}}$ that equals $z$. For each selected pair $(i,k)$, the value $\lambda_{ik}$ represents a positive coefficient in the desired conic combination.

To ensure that each index pair with a positive coefficient is an exchangeable pair (see also Lemma~\ref{lemma_member}), i.e., is in $\mathcal{E}_{\mathcal{C}'}(x)$, we restrict the choice of index pair in the procedure in the following way. First, we order the subsets such that $\mathcal{N}_{j} \subset \mathcal{N}_{j'}$ implies $j > j'$ for all $j,j' \in \mathcal{M}$. Moreover, we define $\mathcal{N}_0 := \mathcal{N}$. Now we iterate through the subsets from $\mathcal{N}_m$ to $\mathcal{N}_0$ and during iteration $j$ we allow only exchanges between variables whose indices belong to the current subset $\mathcal{N}_j$.

The procedure is summarized in Algorithm~\ref{alg_combi}. In this algorithm, for any $j \in \lbrace 0 \rbrace \cup \mathcal{M}$, $t_j$ is the last iteration index such that no exchanges are allowed between a variable whose index is in $\mathcal{N}_j$ and a variable whose index is not in $\mathcal{N}_j$. 
\begin{algorithm}
\caption{Computing $z-x$ as a conic combination of vectors in $\mathcal{E}_{\mathcal{C}'}(x)$.}
\label{alg_combi}
\begin{algorithmic}[1]
\STATE{\textbf{Input:} Two feasible solutions $x$, $z$ to S/LC/$\gamma$}
\STATE{\textbf{Output:} Weight matrix $\lambda \in \mathbb{R}^{n \times n}_{\geq 0}$}
\STATE{Initialize $\lambda_{ik} = 0$ for all $i,k \in \mathcal{N}$}
\STATE{Order subsets such that $\mathcal{N}_{j} \subset \mathcal{N}_{j'}$ implies $j > j'$ for all $j,j' \in \mathcal{M}$}
\STATE{$\mathcal{N}_0 := \mathcal{N}$; $t = 0$; $\bar{x}^{0} := x$}
\FOR{$j=m$ down to $0$}
\WHILE{there exist $i,k \in \mathcal{N}_j$ such that $\bar{x}_i^{t} > z_i$ and $\bar{x}_k^{t} < z_k$}
\STATE{$\lambda_{ik} := \min (\bar{x}_i^{t} - z_i, z_k - \bar{x}_k^{t})$}
\STATE{$\bar{x}^{t+1} := \bar{x}^{t} + \lambda_{ik}(e^k - e^i)$}
\STATE{$t = t + 1$}
\ENDWHILE
\STATE{$t_j = t$}
\ENDFOR
\end{algorithmic}
\end{algorithm}

In Lemma~\ref{lemma_combi}, we prove several properties of the output $\lambda$ of the algorithm and of the intermediate vectors $(\bar{x}^t)_{t \geq 0}$.
\begin{lemma}
The following statements hold for the output $\lambda$ and the sequence of intermediate vectors $(\bar{x}^t)_{t \geq 0}$ of Algorithm~\ref{alg_combi} when applied to two feasible solutions $x$ and $z$ to S/LC/$\gamma$:
\begin{enumerate}
\item
%$\sum_{i \in \mathcal{N}} \bar{x}_i^{t} = \sum_{i \in \mathcal{N}} x_i$ for all $t \geq 0$.
$\sum_{i \in \mathcal{N}} \bar{x}_i^{t} = C$ for all $t \geq 0$.
\item
If $x_i^{t} > z_i$ for a given $t \geq 0$, then $z_i \leq \bar{x}_i^{t'} \leq \bar{x}_i^{t} \leq x_i$ for all $t' > t$;
\item
If $x_i^{t} < z_i$ for a given $t \geq 0$, then $z_i \geq \bar{x}_i^{t'} \geq \bar{x}_i^{t} \geq x_i$ for all $t' > t$;
\item
If $x_i^{t} = z_i$ for a given $t \geq 0$, then $x_i^{t'} = z_i$ for all $t' > t$.
\item
For a given $j$ and $t \geq t_j$, we have that either $\bar{x}_i^{t} \leq z_i$ for all $i \in \mathcal{N}_j$ or $\bar{x}_i^t \geq z_i$ for all $i \in \mathcal{N}_j$.
\item
Each index pair $(i,k) \in \mathcal{N}^2$ is selected at most once over the entire course of the algorithm.
\item
For a given $j$ and any $t \leq t_j$, it holds that $\sum_{\ell \in \mathcal{N}_j} \bar{x}^{t}_{\ell} = \sum_{\ell \in \mathcal{N}_j} x_{\ell}$.
\item
$z-x = \sum_{(i,k) \in \mathcal{N}^2} \lambda_{ik}(e^k - e^i) $.
\end{enumerate}
\label{lemma_combi}
\end{lemma}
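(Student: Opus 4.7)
My plan is to establish the eight statements of Lemma~\ref{lemma_combi} in the order they are listed, since most of them feed into the proof of statement~8. The only place where the laminarity of the constraint structure enters in an essential way is statement~7, which I expect to be the main obstacle.

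I would start with statement~1 via a straightforward induction on $t$: the base case follows from $\bar{x}^0 = x$ and feasibility of $x$, while the inductive step uses that each update $\bar{x}^{t+1} = \bar{x}^{t} + \lambda_{ik}(e^k - e^i)$ adds a vector whose entries sum to zero. For statements~2--4, the key observation is that index $i$ can only be selected as the ``giver'' of a pair when $\bar{x}_i^t > z_i$ and as the ``receiver'' when $\bar{x}_i^t < z_i$, and the step size $\lambda_{ik} = \min(\bar{x}_i^t - z_i, z_k - \bar{x}_k^t)$ is designed never to overshoot $z_i$ or $z_k$. Hence, as a function of $t$, each $\bar{x}_i^t$ moves monotonically from $x_i$ towards $z_i$ and freezes as soon as it reaches $z_i$; this yields statements~2, 3, and~4 simultaneously.

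Statement~5 then follows by combining the termination criterion of the inner while loop with this monotonicity: at $t = t_j$ the loop condition fails, so either no $i \in \mathcal{N}_j$ satisfies $\bar{x}_i^{t_j} > z_i$ or no $k \in \mathcal{N}_j$ satisfies $\bar{x}_k^{t_j} < z_k$, and statements~2--4 preserve this one-sided comparison for all $t \geq t_j$. For statement~6, the step size forces $\bar{x}_i^{t+1} = z_i$ or $\bar{x}_k^{t+1} = z_k$ after each selection, and the frozen coordinate then prevents $(i,k)$ from ever being eligible again. The main obstacle, statement~7, I would handle by showing that the sum $\sum_{\ell \in \mathcal{N}_j} \bar{x}_\ell^t$ is preserved by every update in the iterations $j' = m, m-1, \ldots, j$. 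Each such update exchanges within some $\mathcal{N}_{j'}$ with $j' \geq j$; by the chosen ordering (``$\mathcal{N}_j \subset \mathcal{N}_{j'}$ implies $j > j'$''), the case $\mathcal{N}_{j'} \supset \mathcal{N}_j$ is excluded, and laminarity leaves only the possibilities $\mathcal{N}_{j'} \cap \mathcal{N}_j = \emptyset$ or $\mathcal{N}_{j'} \subseteq \mathcal{N}_j$. In both cases the exchange either does not touch $\mathcal{N}_j$ or moves mass entirely within $\mathcal{N}_j$, so $\sum_{\ell \in \mathcal{N}_j} \bar{x}_\ell^t$ is unchanged.

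Finally, for statement~8, I would apply statement~5 with $j = 0$ (recall $\mathcal{N}_0 = \mathcal{N}$) to obtain a one-sided comparison between $\bar{x}^{t_0}$ and $z$ on all of $\mathcal{N}$. Statement~1 together with feasibility of $z$ gives $\sum_{i \in \mathcal{N}} \bar{x}_i^{t_0} = \sum_{i \in \mathcal{N}} z_i$, so this comparison must in fact be an equality, i.e., $\bar{x}^{t_0} = z$. Telescoping the updates $\bar{x}^{t+1} - \bar{x}^t = \lambda_{ik}(e^k - e^i)$ and invoking statement~6 to see that each pair $(i,k)$ contributes at most once then yields the desired identity $z - x = \sum_{(i,k) \in \mathcal{N}^2} \lambda_{ik}(e^k - e^i)$.
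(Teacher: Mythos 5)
Your proposal is correct and follows essentially the same route as the paper's proof: induction for statement~1, monotone non-overshooting updates for statements~2--4, the while-loop termination condition plus monotonicity for statement~5, the freezing argument for statement~6, and the laminarity/ordering case analysis for statement~7. The only difference is that your argument for statement~8 is slightly more explicit than the paper's (which simply cites statement~6): you justify that the final iterate equals $z$ by combining statement~5 at $j=0$ with the conserved sum from statement~1, a detail the paper leaves to the informal discussion preceding the algorithm.
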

\begin{proof}
Part~(1): Follows by induction on $t$ since $\sum_{\ell \in \mathcal{N}} \bar{x}_{\ell}^{t+1} = \sum_{\ell \in \mathcal{N}} \bar{x}_{\ell}^{t} + \lambda_{ik} - \lambda_{ik} = \sum_{\ell \in \mathcal{N}} \bar{x}_{\ell}^{t}$ for all $t \geq 0$ and $\bar{x}^{0} = x$.

Part~(2): For a given $t \geq 0$, we have that $\bar{x}^{t}_i > z_i$ implies that either $\bar{x}^{t+1}_i = \bar{x}^{t}_i$ (if $i$ is not selected during iteration~$t$) or $z_i \leq \bar{x}^{t+1}_i < \bar{x}^{t}_i$ (if $i$ is selected during iteration~$t$). Thus, we have that $\bar{x}^{t}_i > z_i$ implies that $z_i \leq \bar{x}^{t+1}_i \leq \bar{x}^{t}_i$. By induction, one can deduce that if $x_i^t > z_i$, then $z_i \leq \bar{x}^{t'}_i \leq \bar{x}^{t}_i \leq x_i$ for all $t \geq 0$ and $t' > t$.

Part~(3): Is analogous to the proof of Part~(2).

Part~(4): If $x_i^{t} = z_i$, then $i$ will not be selected anymore as part of an exchangeable pair. Hence, $x_i^{t} = x_i^{t+1} = \dots = z_i$.

Part~(5): By definition of $t_j$, we have that either $\bar{x}^{t_j}_{\ell} \geq z_{\ell}$ for all $\ell \in \mathcal{N}_j$ or $\bar{x}^{t_j}_{\ell} \leq z_{\ell}$ for all $\ell \in \mathcal{N}_j$. It follows directly from Parts~(2)-(4) that in the first case $\bar{x}^{t}_{\ell} \geq z_{\ell}$ for all $\ell \in \mathcal{N}_j$ and that in the second case $\bar{x}^{t}_{\ell} \leq z_{\ell}$ for all $\ell \in \mathcal{N}_j$.

Part~(6): If the pair $(i,k)$ is chosen during some iteration~$t$, then either $\bar{x}^{t+1}_i = z_i$ or $\bar{x}^{t+1}_k = z_k$. Thus, at least one of the indices $i,k$ cannot be chosen again as part of a pair, hence the pair $(i,k)$ is selected at most once.

Part~(7): For a given $t \leq t_j$, let $(i,k)$ denote the selected pair during iteration~$t-1$. Thus, there is a subset $\mathcal{N}_{j'}$ with $j' > j$ such that $i,k \in \mathcal{N}_{j'}$. By the ordering of the subsets, we have either $\mathcal{N}_{j} \cap \mathcal{N}_{j'} = \emptyset$ or $\mathcal{N}_{j'} \subset \mathcal{N}_{j}$. Thus, either both or neither of the indices $i$ and $k$ are in $\mathcal{N}_j$. This implies that $\sum_{\ell \in \mathcal{N}_j} \bar{x}_{\ell}^{t} = \sum_{\ell \in \mathcal{N}_j} \bar{x}_{\ell}^{t-1}$. By induction on $t$, it follows that $\sum_{\ell \in \mathcal{N}_j} \bar{x}_{\ell}^{t} = \sum_{\ell \in \mathcal{N}_j} \bar{x}_{\ell}^{0} =  \sum_{\ell \in \mathcal{N}_j} x_{\ell}$.

Part~(8): Follows from Part~(6) and the fact that $\lambda_{ik} = 0$ if the pair $(i,k)$ has not been chosen during any iteration.
\end{proof}

Lemma~\ref{lemma_combi} implies that for any two feasible solutions $x$ and $z$, the difference $z-x$ can be written as a nonnegative combination of the vectors $(e^k - e^i)_{(i,k) \in \mathcal{N}^2}$. We strengthen this result in Lemma~\ref{lemma_member} by proving that $z-x$ can be written as a nonnegative combination of the vectors in $\mathcal{E}_{\mathcal{C}'}(x)$.
\begin{lemma}
Let $\lambda$ and $(\bar{x}^t)_{t \geq 0}$ be the output of Algorithm~\ref{alg_combi} applied to two feasible solutions $x$ and $z$ of the problem S/LC/$\gamma$. If $\lambda_{ik} > 0$ for a given pair $(i,k) \in \mathcal{N}^2$, then $(i,k) \in \mathcal{E}_{\mathcal{C}'}(x)$ and $\lambda_{\ell,i} = \lambda_{k,\ell} = 0$ for all $\ell \in \mathcal{N}$.
\label{lemma_member}
\end{lemma}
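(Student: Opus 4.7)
The plan is to focus on the unique iteration $t$ at which the pair $(i,k)$ is selected—by Part~(6) there is only one such iteration—which lies inside the while-loop for some subset $\mathcal{N}_{j^*}$ with $i, k \in \mathcal{N}_{j^*}$. The key observation is a strict monotonicity statement at $i$ (and, symmetrically, at $k$). Because $\bar{x}^t_i > z_i$, Parts~(3) and~(4) of Lemma~\ref{lemma_combi} rule out any $t_0 < t$ with $\bar{x}^{t_0}_i \leq z_i$: such a $t_0$ would propagate forward to give $\bar{x}^t_i \leq z_i$. Hence $\bar{x}^{t'}_i > z_i$ for all $t' \leq t$, and Part~(2) gives $\bar{x}^{t'}_i \geq z_i$ for all $t' \geq t$. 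Analogously $\bar{x}^{t'}_k < z_k$ for $t' \leq t$ and $\bar{x}^{t'}_k \leq z_k$ for $t' \geq t$; in particular $x_i > z_i$ and $x_k < z_k$.

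From this monotonicity, the vanishing of $\lambda_{\ell,i}$ and $\lambda_{k,\ell}$ for all $\ell \in \mathcal{N}$ is immediate: a pair $(\ell,i)$ can only be selected at a moment with $\bar{x}_i < z_i$, which never occurs, and analogously $k$ can never appear in the ``giver'' slot of a selected pair.

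What remains is the membership $(i,k) \in \mathcal{E}_{\mathcal{C}'}(x)$, i.e., that $x + \epsilon(e^k - e^i)$ satisfies every laminar (and box and equality) constraint for some $\epsilon > 0$. The equality constraint and any laminar set $\mathcal{N}_{j'}$ that contains both or neither of $i,k$ are invariant under the exchange. The only nontrivial case is a laminar set $\mathcal{N}_{j'}$ containing exactly one of $i,k$; take $i \in \mathcal{N}_{j'}$, $k \notin \mathcal{N}_{j'}$ (the opposite case is symmetric). Since $\mathcal{N}_{j'} \cap \mathcal{N}_{j^*} \ni i$ and $k \in \mathcal{N}_{j^*} \setminus \mathcal{N}_{j'}$, the laminar property forces $\mathcal{N}_{j'} \subsetneq \mathcal{N}_{j^*}$, hence $j' > j^*$ by the chosen ordering, so the loop for $\mathcal{N}_{j'}$ finishes before iteration $t$ and $t_{j'} \leq t$. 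By the first paragraph $\bar{x}^{t_{j'}}_i > z_i$, so Part~(5) applied at time $t_{j'}$ forces $\bar{x}^{t_{j'}}_\ell \geq z_\ell$ for every $\ell \in \mathcal{N}_{j'}$, with strict inequality at $\ell = i$. Summing and invoking Part~(7) yields
\[
\sum_{\ell \in \mathcal{N}_{j'}} x_\ell \;=\; \sum_{\ell \in \mathcal{N}_{j'}} \bar{x}^{t_{j'}}_\ell \;>\; \sum_{\ell \in \mathcal{N}_{j'}} z_\ell \;\geq\; L_{j'},
\]
giving the strict slack needed for a decrease by $\epsilon$. The degenerate cases $\mathcal{N}_{j'} = \{i\}$ and $\{k\}$ recover the box constraints.

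The crux of the argument lies in this last step: the selection criterion is phrased in terms of $\bar{x}^t$, while exchangeability is a property of the original $x$. Bridging the two relies jointly on the monotonicity of the first paragraph (to upgrade $\bar{x}^{t_{j'}}_i \geq z_i$ to a \emph{strict} inequality) and on the conservation identity of Part~(7), both of which are available only because the laminar ordering guarantees that $\mathcal{N}_{j'}$ is processed strictly before $\mathcal{N}_{j^*}$.
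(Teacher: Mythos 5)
Your proposal is correct and follows essentially the same route as the paper's proof: establish via the monotonicity parts of Lemma~\ref{lemma_combi} that $\bar{x}^{t'}_i \geq z_i$ and $\bar{x}^{t'}_k \leq z_k$ throughout (with strict inequality before the selection step), deduce the vanishing of $\lambda_{\ell i}$ and $\lambda_{k\ell}$ from this, and then verify strict slack in each laminar constraint separating $i$ from $k$ by combining Part~(5), the conservation identity of Part~(7), and feasibility of $z$. Your write-up is in fact slightly more careful than the paper's on two points — the explicit forward-propagation argument that rules out $\bar{x}^{t_0}_i \leq z_i$ at earlier times, and the explicit treatment of the equality and box constraints — but the underlying argument is the same.
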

\begin{proof}
Note that for any two indices $i,k \in \mathcal{N}$, the solution $x + \epsilon (e^k - e^i)$ is feasible for some $\epsilon > 0$ if and only if we have for each subset $\mathcal{N}_j$ that contains $i$ but not $k$ that $\sum_{\ell \in \mathcal{N}_j} x_{\ell} > L_j$, and for each subset $\mathcal{N}_{j'}$ that contains $k$ but not $i$ that $\sum_{\ell \in \mathcal{N}_{j'}} x_{\ell} < U_{j'}$. Let $\mathcal{N}_{j'}$ be the minimal subset that contains both $i$ and $k$, i.e., there is no other subset $\mathcal{N}_j$ such that $\mathcal{N}_j \subset \mathcal{N}_{j'}$ and $i,k \in \mathcal{N}_j$. If $\lambda_{ik} > 0$, then there exists $t_{j'+1} < t \leq t_{j'}$ such that the pair $(i,k)$ has been selected during iteration~$t$. Thus, $\bar{x}_i^t > \bar{x}_i^{t+1} \geq z_i$ and $\bar{x}_k^t < \bar{x}_k^{t+1} \leq z_k$. By Parts~(2) and~(3) of Lemma~\ref{lemma_combi}, this means that $x_i > z_i$ and $x_k < z_k$ and that $\bar{x}^{t}_i \geq z_i$ and $\bar{x}^{t}_k \leq z_k$ for all $t \geq 0$. By Part~(5) of Lemma~\ref{lemma_combi}, this means that for any subset $\mathcal{N}_j$ that contains $i$ but not $k$ we have that $\bar{x}^{t_j}_{\ell} \geq z_{\ell}$ for all $\ell \in \mathcal{N}_j$ since $j > j'$. In particular, we have by Part~(2) that $\bar{x}^{t_j}_i > z_i$ since $\bar{x}^{t}_i > z_i$. It follows from feasibility of $z$ and Part~(7) that $L_j \leq \sum_{\ell \in \mathcal{N}_j} z_{\ell} < \sum_{\ell \in \mathcal{N}_j} \bar{x}^{t_j}_{\ell} = \sum_{\ell \in \mathcal{N}_j} x_{\ell}$. Analogously, we can show that $U_j > \sum_{\ell \in \mathcal{N}_j} x_{\ell}$. Thus, the solution $x + \epsilon(e^k - e^i)$ is feasible for some $\epsilon > 0$, hence $(i,k) \in \mathcal{E}_{\mathcal{C}'}(x)$.

Note that for any $\ell \in \mathcal{N}$, we can only have that $\lambda_{\ell i} >0$ if there is some iteration~$t$ with $\bar{x}^{t}_i < z_i$. Since $\bar{x}^{t}_i \geq z_i$ for all $t \geq 0$, we must have that $\lambda_{\ell i} = 0$. Analogously, we must have that $\lambda_{k \ell} = 0$ since $\bar{x}^{t}_k \leq z_k$ for all $t \geq 0$.
\end{proof}

Lemma~\ref{lemma_member} implies that we can partition $\mathcal{N}$ into three subsets such that one subset contains all indices~$i$ for which $\lambda_{ik} > 0$ for at least one $k \in \mathcal{N}$, one subset contains all indices $i$ for which $\lambda_{ki} > 0$ for at least one $k \in \mathcal{N}$, and one subset contains all indices $i$ such that $\lambda_{ik} = \lambda_{ki} = 0$ for all $k \in \mathcal{N}$. More precisely, we can define the following partition of $\mathcal{N}$:
\begin{align*}
\mathcal{L}(x) &:= \lbrace i \in \mathcal{N} \ | \ \lambda_{ik} > 0 \text{ for some } k \in \mathcal{N} \rbrace, \\
\mathcal{U}(x) &:= \lbrace i \in \mathcal{N} \ | \ \lambda_{ki} > 0 \text{ for some } k \in \mathcal{N} \rbrace, \\
\mathcal{F}(x) &:= \mathcal{N} \backslash (\mathcal{L}(x) \cup \mathcal{U}(x))
= \lbrace i \in \mathcal{N} \ | \ \lambda_{ik} = \lambda_{ki} = 0 \text{ for all } k \in \mathcal{N} \rbrace.
\end{align*}
Using this partition and Lemma~\ref{lemma_member}, we can show that S/LC/$\gamma$ satisfies Condition~\ref{prop_opt_cond}.
\begin{lemma}
For $\gamma \in \lbrace \text{C},\text{I} \rbrace$, the problem S/LC/$\gamma$ satisfies Condition~\ref{prop_opt_cond}.
\label{lemma_opt_laminar}
\end{lemma}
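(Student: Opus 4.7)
The plan is to establish both directions of Condition~\ref{prop_opt_cond} for $\mathcal{C}'$, the feasible set of S/LC/$\gamma$, treating necessity by a standard one-sided derivative argument and sufficiency via the flow decomposition supplied by Algorithm~\ref{alg_combi} together with Lemmas~\ref{lemma_combi} and~\ref{lemma_member}.

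For necessity, I would take an optimal $x$ and an arbitrary $(i,k) \in \mathcal{E}_{\mathcal{C}'}(x)$. By definition there exists $\epsilon > 0$ with $x + \epsilon(e^k - e^i)$ feasible, so optimality of $x$ gives $\phi_k(x_k + \epsilon) - \phi_k(x_k) \geq \phi_i(x_i) - \phi_i(x_i - \epsilon)$. Dividing by $\epsilon$ and sending $\epsilon$ down through admissible values for the chosen $\gamma$ reduces the left-hand and right-hand sides to $\phi_k^+(x_k)$ and $\phi_i^-(x_i)$ respectively by the definition of the one-sided derivatives, yielding the claimed inequality.

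For sufficiency, assume $\phi_k^+(x_k) \geq \phi_i^-(x_i)$ for every $(i,k) \in \mathcal{E}_{\mathcal{C}'}(x)$ and take any feasible $z$. I would apply Algorithm~\ref{alg_combi} to $(x,z)$ to obtain weights $\lambda$ satisfying $z - x = \sum_{(i,k)} \lambda_{ik}(e^k - e^i)$. Lemma~\ref{lemma_member} guarantees that each pair $(i,k)$ with $\lambda_{ik} > 0$ lies in $\mathcal{E}_{\mathcal{C}'}(x)$ and that the partition $\mathcal{L}(x) \cup \mathcal{U}(x) \cup \mathcal{F}(x)$ of $\mathcal{N}$ separates pure senders (for which $z_i \leq x_i$ and $x_i - z_i = \sum_k \lambda_{ik}$), pure receivers (for which $z_i \geq x_i$ and $z_i - x_i = \sum_k \lambda_{ki}$), and inactive indices (for which $z_i = x_i$). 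I would then apply the convexity subgradient inequality to each $\phi_i$, using $\phi_i^-(x_i)$ as the subgradient on $\mathcal{L}(x)$ and $\phi_i^+(x_i)$ on $\mathcal{U}(x)$ (these are the tightest one-sided choices given the sign of $z_i - x_i$), sum over $i$, and substitute the flow expressions to telescope the result into $\sum_{(i,k) : \lambda_{ik} > 0} \lambda_{ik}\bigl(\phi_k^+(x_k) - \phi_i^-(x_i)\bigr)$, which is nonnegative by the hypothesis, so $\Phi(z) \geq \Phi(x)$.

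The key obstacle is engineering the convexity lower bounds so that the final sum decomposes cleanly into contributions indexed by exchangeable pairs only; this is exactly where the sender/receiver disjointness guaranteed by Lemma~\ref{lemma_member} is essential, since it eliminates cross-terms that would otherwise mix $\phi_i^+$ and $\phi_i^-$ at the same index and block the collapse. For $\gamma = \mathrm{I}$, no modification is needed: Algorithm~\ref{alg_combi} preserves integrality of the intermediate vectors $\bar{x}^t$ and of the weights $\lambda_{ik}$ whenever $x$ and $z$ are integer, and the subgradient inequalities for $\phi_i : \mathbb{R} \to \mathbb{R}$ remain valid at integer arguments.
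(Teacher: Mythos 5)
Your proof takes essentially the same route as the paper: the sufficiency direction reproduces the paper's argument (conic decomposition of $z-x$ over exchangeable pairs via Algorithm~\ref{alg_combi} and Lemmas~\ref{lemma_combi}--\ref{lemma_member}, followed by the subgradient inequality with $g_i = \phi_i^-(x_i)$ on $\mathcal{L}(x)$ and $g_i = \phi_i^+(x_i)$ on $\mathcal{U}(x)$, collapsing to $\sum \lambda_{ik}(\phi_k^+(x_k)-\phi_i^-(x_i)) \geq 0$), and the necessity direction is just the direct form of the paper's contrapositive. The one point to tighten is the limit $\epsilon \downarrow 0$ in your necessity step: for $\gamma=\mathrm{C}$ it is legitimate since convexity of the feasible region makes every smaller $\epsilon$ admissible, but for $\gamma=\mathrm{I}$ the admissible $\epsilon$ are bounded below by $1$ and the unit difference quotients bound $\phi_k^+(x_k)$ from below and $\phi_i^-(x_i)$ from above, i.e., from the wrong sides for your conclusion, so there you should instead argue as the paper does, by exhibiting a strictly improving feasible step from a violated pair.
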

\begin{proof}
First, we prove the ``only if''-part. Suppose $x$ is optimal for S/LC/$\gamma$ and there exists an index pair $(i,k) \in \mathcal{E}_{\mathcal{C}'}(x)$ such that $\phi_k^{+}(x_k) < \phi_i^{-}(x_i)$. By definition of $\mathcal{E}_{\mathcal{C}'}(x)$ and the left and right derivatives $\phi_k^{+}$ and $\phi_i^{-}$, there exists $\epsilon > 0$ such that $x + \epsilon(e^k - e^i)$ is feasible and
\begin{equation*}
\phi_k (x_k + \epsilon) + \phi_i (x_i - \epsilon) < \phi_k (x_k) + \phi_i (x_i).
\end{equation*}
This implies that the objective value of $x + \epsilon(e^k-e^i)$ is smaller than that of $x$. Hence, $x$ cannot be optimal, which is a contradiction. It follows that $\phi_k^{+}(x_k) \geq \phi_i^{-}(x_i)$ for all $(i,k) \in \mathcal{E}_{\mathcal{C}'}(x)$.

Second, we prove the ``if''-part. Let $x$ be a feasible solution such that $\phi_k^{+}(x_k) \geq \phi_i^{-}(x_i)$ for all $(i,k) \in \mathcal{E}_{\mathcal{C}'}(x)$ and let $z$ be an arbitrary feasible solution. Moreover, let $\lambda \in \mathbb{R}^{n \times n}$ denote the output of Algorithm~\ref{alg_combi} when applied to $x$ and $z$. By Lemma~\ref{lemma_member} and definition of the sets $\mathcal{L}(x)$, $\mathcal{U}(x)$, and $\mathcal{F}(x)$, we have that
\begin{align*}
z-x &= \sum_{(i,k) \in \mathcal{N}^2} \lambda_{ik}(e^k - e^i)
= \sum_{(i,k) \in \mathcal{E}_{\mathcal{C}'}(x)} \lambda_{ik}(e^k - e^i) \\
&= \sum_{\substack{(i,k) \in \mathcal{E}_{\mathcal{C}'}(x),\\ i \in \mathcal{L}(x)}} \lambda_{ik}(e^k - e^i) 
= \sum_{\substack{(i,k) \in \mathcal{E}_{\mathcal{C}'}(x),\\ i \in \mathcal{L}(x), \\ k \in \mathcal{U}(x)}} \lambda_{ik}(e^k - e^i).
\end{align*}
We define the following subgradient $g \in \mathbb{R}^n$ at the solution $x$:
\begin{equation*}
g_i \begin{cases}
:= \phi_i^{-}(x_i) & \text{if } i \in \mathcal{L}(x), \\
:= \phi_i^{+}(x_i) & \text{if } i \in \mathcal{U}(x), \\
\in [\phi_i^{-}(x_i),\phi_i^{+}(x_i)] & \text{if } i \in \mathcal{F}(x).
\end{cases}
\end{equation*}
By convexity of the functions $\phi_i$, it follows that
\begin{align*}
\sum_{i \in \mathcal{N}} (\phi_i(z_i) - \phi_i(x_i))
&\geq
g^{\top} (z - x)
=
\sum_{\substack{(i,k) \in \mathcal{E}_{\mathcal{C}'}(x),\\ i \in \mathcal{L}(x), \\ k \in \mathcal{U}(x)}} \lambda_{ik} g^{\top}(e^k - e^i)  \\
&= \sum_{\substack{(i,k) \in \mathcal{E}_{\mathcal{C}'}(x),\\ i \in \mathcal{L}(x), \\ k \in \mathcal{U}(x)}} \lambda_{ik} g^{\top}(e^k - e^i) 
= 
\sum_{\substack{(i,k) \in \mathcal{E}_{\mathcal{C}'}(x),\\ i \in \mathcal{L}(x), \\ k \in \mathcal{U}(x)}} \lambda_{ik} (\phi_k^{+}(x_k) - \phi_i^{-}(x_i))
 \geq 0.
\end{align*}
It follows that $x$ is optimal since $z$ is an arbitrary feasible solution.
\end{proof}

\bibliographystyle{abbrv}

\bibliography{Reduction_library}

\begin{thebibliography}{10}

\bibitem{Akhil2018}
P.~T. Akhil and R.~Sundaresan.
\newblock {Algorithms for separable convex optimization with linear ascending
  constraints}.
\newblock {\em Sādhanā}, 43(9):146, 2018.

\bibitem{Alexandrescu2017}
A.~Alexandrescu.
\newblock {Fast deterministic selection}.
\newblock In C.~S.~I. Raman, S.~P. Pissis, S.~J. Puglisi, and Rajeev, editors,
  {\em Leibniz International Proceedings in Informatics, LIPIcs}, volume~75,
  pages 24:1--24:9. Schloss Dagstuhl--Leibniz-Zentrum fuer Informatik, 2017.

\bibitem{Bach2013}
F.~Bach.
\newblock {Learning with submodular functions: A convex optimization
  perspective}.
\newblock {\em Found. Trends{\textregistered} Mach. Learn.}, 6(2-3):145--373,
  2013.

\bibitem{Bach2010}
F.~R. Bach.
\newblock {Structured sparsity-inducing norms through submodular functions}.
\newblock In J.~D. Lafferty, C.~K.~I. Williams, J.~Shawe-Taylor, R.~S. Zemel,
  and A.~Culotta, editors, {\em Advances in Neural Information Processing
  Systems 23}, pages 118--126. Curran Associates, Inc., 2010.

\bibitem{Bampis2012}
E.~Bampis, C.~D{\"{u}}rr, F.~Kacem, and I.~Milis.
\newblock {Speed scaling with power down scheduling for agreeable deadlines}.
\newblock {\em Sustain. Comput. Inform. Syst.}, 2(4):184--189, 2012.

\bibitem{Blum1973}
M.~Blum, R.~W. Floyd, V.~Pratt, R.~L. Rivest, and R.~E. Tarjan.
\newblock {Time bounds for selection}.
\newblock {\em J. Comput. Syst. Sci.}, 7(4):448--461, 1973.

\bibitem{Brucker1984}
P.~Brucker.
\newblock {An {$O(n)$} algorithm for quadratic knapsack problems}.
\newblock {\em Oper. Res. Lett.}, 3(3):163--166, 1984.

\bibitem{Chakrabarty2014}
D.~Chakrabarty, P.~Jain, and P.~Kothari.
\newblock {Provable submodular minimization using wolfe's algorithm}.
\newblock In Z.~Ghahramani, M.~Welling, C.~Cortes, N.~D. Lawrence, and K.~Q.
  Weinberger, editors, {\em Advances in Neural Information Processing Systems
  27}, pages 802--809. Curran Associates, Inc., 2014.

\bibitem{Combettes2018b}
P.~L. Combettes.
\newblock {Perspective functions: Properties, constructions, and examples}.
\newblock {\em Set-Valued Var. Anal.}, 26(2):247--264, 2018.

\bibitem{Combettes2018a}
P.~L. Combettes and C.~L. M{\"{u}}ller.
\newblock {Perspective functions: Proximal calculus and applications in
  high-dimensional statistics}.
\newblock {\em J. Math. Anal. Appl.}, 457(2):1283--1306, 2018.

\bibitem{Cosares1994}
S.~Cosares and D.~S. Hochbaum.
\newblock {Strongly polynomial algorithms for the quadratic transportation
  problem with a fixed number of sources}.
\newblock {\em Math. Oper. Res.}, 19(1):94--111, 1994.

\bibitem{Dai2006}
Y.-H. Dai and R.~Fletcher.
\newblock {New algorithms for singly linearly constrained quadratic programs
  subject to lower and upper bounds}.
\newblock {\em Math. Program.}, 106(3):403--421, 2006.

\bibitem{DAmico2014}
A.~A. D'Amico, L.~Sanguinetti, and D.~P. Palomar.
\newblock {Convex separable problems with linear constraints in signal
  processing and communications}.
\newblock {\em IEEE Trans. Signal Process.}, 62(22):6045--6058, 2014.

\bibitem{Frederickson1982}
G.~N. Frederickson and D.~B. Johnson.
\newblock {The complexity of selection and ranking in X + Y and matrices with
  sorted columns}.
\newblock {\em J. Comput. Syst. Sci.}, 24(2):197--208, 1982.

\bibitem{Friedrich2015}
U.~Friedrich, R.~M{\"{u}}nnich, S.~de~Vries, and M.~Wagner.
\newblock {Fast integer-valued algorithms for optimal allocations under
  constraints in stratified sampling}.
\newblock {\em Comput. Stat. Data Anal.}, 92:1--12, 2015.

\bibitem{Fujishige1980}
S.~Fujishige.
\newblock {Lexicographically optimal base of a polymatroid with respect to a
  weight vector}.
\newblock {\em Math. Oper. Res.}, 5(2):186--196, 1980.

\bibitem{Fujishige1984}
S.~Fujishige.
\newblock {Structures of polyhedra determined by submodular functions on
  crossing families}.
\newblock {\em Math. Program.}, 29(2):125--141, 1984.

\bibitem{Fujishige2005}
S.~Fujishige.
\newblock {Submodular functions and optimization}.
\newblock {\em Ann. Discret. Math.}, 58:1--395, 2005.

\bibitem{Fujishige2011}
S.~Fujishige and S.~Isotani.
\newblock {A submodular function minimization algorithm based on the
  minimum-norm base}.
\newblock {\em Pac. J. Optim.}, 7(1):3--17, 2011.

\bibitem{Gerards2014}
M.~E.~T. Gerards.
\newblock {\em {Algorithmic power management: Energy minimisation under
  real-time constraints}}.
\newblock PhD thesis, University of Twente, 2014.

\bibitem{Gerards2016aa}
M.~E.~T. Gerards, J.~L. Hurink, and P.~K.~F. H{\"{o}}lzenspies.
\newblock {A survey of offline algorithms for energy minimization under
  deadline constraints}.
\newblock {\em J. Sched.}, 19(1):3--19, 2016.

\bibitem{Gerards2014a}
M.~E.~T. Gerards, J.~L. Hurink, P.~K.~F. H{\"{o}}lzenspies, J.~Kuper, and
  G.~J.~M. Smit.
\newblock {Analytic clock frequency selection for global DVFS}.
\newblock In {\em 2014 22nd Euromicro International Conference on Parallel,
  Distributed, and Network-Based Processing}, pages 512--519, Turin, 2014.

\bibitem{Gerards2015}
M.~E.~T. Gerards, H.~A. Toersche, G.~Hoogsteen, T.~van~der Klauw, J.~L. Hurink,
  and G.~J.~M. Smit.
\newblock {Demand side management using profile steering}.
\newblock In {\em 2015 IEEE Eindhoven PowerTech}, Eindhoven, 2015. IEEE.

\bibitem{Gondzio2012}
J.~Gondzio.
\newblock {Interior point methods 25 years later}.
\newblock {\em Eur. J. Oper. Res.}, 218(3):587--601, 2012.

\bibitem{Groenevelt1991}
H.~Groenevelt.
\newblock {Two algorithms for maximizing a separable concave function over a
  polymatroid feasible region}.
\newblock {\em Eur. J. Oper. Res.}, 54(2):227--236, 1991.

\bibitem{Harks2014}
T.~Harks, M.~Klimm, and B.~Peis.
\newblock {Resource competition on integral polymatroids}.
\newblock In T.-Y. Liu, Q.~Qi, and Y.~Ye, editors, {\em 10th International
  Conference on Web and Internet Economics}, pages 189--202, Cham, 2014.
  Springer International Publishing.

\bibitem{He2013}
P.~He, L.~Zhao, S.~Zhou, and Z.~Niu.
\newblock {Water-filling: A geometric approach and its application to aolve
  generalized radio resource allocation problems}.
\newblock {\em IEEE Trans. Wirel. Commun.}, 12(7):3637--3647, 2013.

\bibitem{He2012}
S.~He, J.~Zhang, and S.~Zhang.
\newblock {Polymatroid optimization, submodularity, and joint replenishment
  games}.
\newblock {\em Oper. Res.}, 60(1):128--137, 2012.

\bibitem{Hochbaum1994}
D.~S. Hochbaum.
\newblock {Lower and upper bounds for the allocation problem and other
  nonlinear optimization problems}.
\newblock {\em Math. Oper. Res.}, 19(2):390--409, 1994.

\bibitem{Hochbaum1995}
D.~S. Hochbaum and S.-P. Hong.
\newblock {About strongly polynomial time algorithms for quadratic optimization
  over submodular constraints}.
\newblock {\em Math. Program.}, 69:269--309, 1995.

\bibitem{Hochbaum1990}
D.~S. Hochbaum and J.~G. Shanthikumar.
\newblock {Convex separable optimization is not much harder than linear
  optimization}.
\newblock {\em J. ACM}, 37(4):843--862, 1990.

\bibitem{Huang2009}
W.~Huang and Y.~Wang.
\newblock {An optimal speed control scheme supported by media servers for
  low-power multimedia applications}.
\newblock {\em Multimed. Syst.}, 15(2):113--124, 2009.

\bibitem{Hvattum2013}
L.~M. Hvattum, I.~Norstad, K.~Fagerholt, and G.~Laporte.
\newblock {Analysis of an exact algorithm for the vessel speed optimization
  problem}.
\newblock {\em Netw.}, 62(2):132--135, 2013.

\bibitem{Ibaraki1988}
T.~Ibaraki and N.~Katoh.
\newblock {\em {Resource allocation problems: Algorithmic approaches}}.
\newblock The MIT Press, Cambridge, MA, 1 edition, 1988.

\bibitem{Irani2007}
S.~Irani, S.~Shukla, and R.~Gupta.
\newblock {Algorithms for power savings}.
\newblock {\em ACM Trans. Algorithms}, 3(4):41:1--41:23, 2007.

\bibitem{Jain2010}
K.~Jain and V.~V. Vazirani.
\newblock {Eisenberg–Gale markets: Algorithms and game-theoretic properties}.
\newblock {\em Games Econ. Behav.}, 70(1):84--106, 2010.

\bibitem{Kaplan2019}
H.~Kaplan, L.~Kozma, O.~Zamir, and U.~Zwick.
\newblock {Selection from heaps, row-sorted matrices, and X+Y using soft
  heaps}.
\newblock In J.~T. Fineman and M.~Mitzenmacher, editors, {\em 2nd Symposium on
  Simplicity in Algorithms (SOSA 2019)}, pages 5:1--5:21, San Diego, 2019.
  Schloss Dagstuhl--Leibniz-Zentrum fuer Informatik.

\bibitem{Katoh2013}
N.~Katoh, A.~Shioura, and T.~Ibaraki.
\newblock {Resource allocation problems}.
\newblock In P.~M. Pardalos, D.-Z. Du, and R.~L. Graham, editors, {\em Handbook
  of Combinatorial Optimization}, pages 2897--2988. Springer, New York, NY, 2
  edition, 2013.

\bibitem{Khakurel2014}
S.~Khakurel, C.~Leung, and T.~Le-Ngoc.
\newblock {A generalized water-filling algorithm with linear complexity and
  finite convergence time}.
\newblock {\em IEEE Wirel. Commun. Lett.}, 3(2):225--228, 2014.

\bibitem{Kiwiel2005}
K.~C. Kiwiel.
\newblock {On Floyd and Rivest's SELECT algorithm}.
\newblock {\em Theor. Comput. Sci.}, 347(1):214--238, 2005.

\bibitem{Kiwiel2008a}
K.~C. Kiwiel.
\newblock {Breakpoint searching algorithms for the continuous quadratic
  knapsack problem}.
\newblock {\em Math. Program.}, 112(2):473--491, 2007.

\bibitem{Kiwiel2008b}
K.~C. Kiwiel.
\newblock {Variable fixing algorithms for the continuous quadratic knapsack
  problem}.
\newblock {\em J. Optim. Theory Appl.}, 136(3):445--458, mar 2008.

\bibitem{Ling2012}
X.~Ling, B.~Wu, P.~Ho, F.~Luo, and L.~Pan.
\newblock {Fast water-filling for agile power allocation in multi-channel
  wireless communications}.
\newblock {\em IEEE Commun. Lett.}, 16(8):1212--1215, 2012.

\bibitem{Liu2020}
S.~Liu.
\newblock {A review for submodular optimization on machine scheduling
  problems}.
\newblock In D.-Z. Du and J.~Wang, editors, {\em Complexity and Approximation:
  In Memory of Ker-I Ko}, pages 252--267. Springer International Publishing,
  Cham, 2020.

\bibitem{Lobo2007}
M.~S. Lobo, M.~Fazel, and S.~Boyd.
\newblock {Portfolio optimization with linear and fixed transaction costs}.
\newblock {\em Ann. Oper. Res.}, 152(1):341--365, 2007.

\bibitem{Lund2016}
H.~Lund, P.~A. {\O}stergaard, D.~Connolly, I.~Ridjan, B.~V. Mathiesen,
  F.~Hvelplund, J.~Z. Thellufsen, and P.~Sorkn{\ae}s.
\newblock {Energy storage and smart energy systems}.
\newblock {\em Int. J. Sustain. Energy Plan. Manag.}, 11:3--14, 2016.

\bibitem{Mairal2011}
J.~Mairal, R.~Jenatton, G.~Obozinski, and F.~Bach.
\newblock {Convex and network flow optimization for structured sparsity}.
\newblock {\em J. Mach. Learn. Res.}, 12(81):2681--2720, 2011.

\bibitem{Megiddo1993}
N.~Megiddo and A.~Tamir.
\newblock {Linear time algorithms for some separable quadratic programming
  problems}.
\newblock {\em Oper. Res. Lett.}, 13(4):203--211, 1993.

\bibitem{Meng2013}
X.~Meng.
\newblock {Scalable simple random sampling and stratified sampling}.
\newblock In S.~Dasgupta and D.~McAllester, editors, {\em Proceedings of the
  30th International Conference on Machine Learning}, volume~28 of {\em
  Proceedings of Machine Learning Research}, pages 531--539, Atlanta, Georgia,
  2013. PMLR.

\bibitem{Moriguchi2004}
S.~Moriguchi and A.~Shioura.
\newblock {On Hochbaum's proximity-scaling algorithm for the general resource
  allocation problem}.
\newblock {\em Math. Oper. Res.}, 29(2):394--397, 2004.

\bibitem{Moriguchi2011}
S.~Moriguchi, A.~Shioura, and N.~Tsuchimura.
\newblock {M-convex function minimization by continuous relaxation approach:
  proximity theorem and algorithm}.
\newblock {\em SIAM J. Optim.}, 21(3):633--668, 2011.

\bibitem{Muller2011}
M.~O. M{\"{u}}ller, A.~St{\"{a}}mpfli, U.~Dold, and T.~Hammer.
\newblock {Energy autarky: A conceptual framework for sustainable regional
  development}.
\newblock {\em Energy Policy}, 39(10):5800--5810, 2011.

\bibitem{Muller-Hannemann2010}
M.~M{\"u}ller-Hannemann and S.~Schirra, editors.
\newblock {\em {Algorithm engineering: Bridging the gap between algorithm
  theory and practice}}.
\newblock Springer Berlin Heidelberg, Berlin, Heidelberg, 1 edition, 2010.

\bibitem{Munkhammar2013}
J.~Munkhammar, P.~Grahn, and J.~Wid{\'{e}}n.
\newblock {Quantifying self-consumption of on-site photovoltaic power
  generation in households with electric vehicle home charging}.
\newblock {\em Sol. Energy}, 97:208--216, 2013.

\bibitem{Nagano2012}
K.~Nagano and K.~Aihara.
\newblock {Equivalence of convex minimization problems over base polytopes}.
\newblock {\em Jpn. J. Ind. Appl. Math.}, 29(3):519--534, 2012.

\bibitem{Nagano2013}
K.~Nagano and Y.~Kawahara.
\newblock {Structured convex optimization under submodular constraints}.
\newblock In {\em Proceedings of the Twenty-Ninth Conference Annual Conference
  on Uncertainty in Artificial Intelligence (UAI-13)}, pages 459--468,
  Corvallis, Oregon, 2013. AUAI Press.

\bibitem{Neyman1934}
J.~Neyman.
\newblock {On the two different aspects of the representative method: The
  method of stratified sampling and the method of purposive selection}.
\newblock {\em J. R. Stat. Soc.}, 97(4):558--606, 1934.

\bibitem{Norstad2011}
I.~Norstad, K.~Fagerholt, and G.~Laporte.
\newblock {Tramp ship routing and scheduling with speed optimization}.
\newblock {\em Transp. Res. Part C Emerg. Technol.}, 19(5):853--865, 2011.

\bibitem{Orlin2013}
J.~B. Orlin and B.~Vaidyanathan.
\newblock {Fast algorithms for convex cost flow problems on circles, lines, and
  trees}.
\newblock {\em Netw.}, 62(4):288--296, 2013.

\bibitem{Palomar2005}
D.~P. Palomar and J.~R. Fonollosa.
\newblock {Practical algorithms for a family of waterfilling solutions}.
\newblock {\em IEEE Trans. Signal Process.}, 53(2):686--695, 2005.

\bibitem{Patriksson2008}
M.~Patriksson.
\newblock {A survey on the continuous nonlinear resource allocation problem}.
\newblock {\em Eur. J. Oper. Res.}, 185(1):1--46, 2008.

\bibitem{Patriksson2015}
M.~Patriksson and C.~Str{\"{o}}mberg.
\newblock {Algorithms for the continuous nonlinear resource allocation problem
  - new implementations and numerical studies}.
\newblock {\em Eur. J. Oper. Res.}, 243(3):703--722, 2015.

\bibitem{Psaraftis2014}
H.~N. Psaraftis and C.~A. Kontovas.
\newblock {Ship speed optimization: Concepts, models and combined speed-routing
  scenarios}.
\newblock {\em Transp. Res. Part C Emerg. Technol.}, 44:52--69, 2014.

\bibitem{Reijnders2020}
V.~M. J.~J. Reijnders, M.~E.~T. Gerards, and J.~L. Hurink.
\newblock {A hybrid pricing mechanism for joint system optimization and social
  acceptance}, 2020.
\newblock Accepted for ENERGYCON 2020, Tunis.

\bibitem{Roberts2011}
B.~P. Roberts and C.~Sandberg.
\newblock {The role of energy storage in development of smart grids}.
\newblock {\em Proc. IEEE}, 99(6):1139--1144, 2011.

\bibitem{Romeijn2007}
H.~E. Romeijn, J.~Geunes, and K.~Taaffe.
\newblock {On a nonseparable convex maximization problem with continuous
  knapsack constraints}.
\newblock {\em Oper. Res. Lett.}, 35(2):172--180, 2007.

\bibitem{Sanathanan1971}
L.~Sanathanan.
\newblock {On an allocation problem with multistage constraints}.
\newblock {\em Oper. Res.}, 19(7):1647--1663, 1971.

\bibitem{SchootUiterkamp2020b}
M.~H.~H. {Schoot Uiterkamp}, M.~E.~T. Gerards, and J.~L. Hurink.
\newblock {A fast algorithm for the quadratic resource allocation problem with
  nested constraints}.
\newblock Working paper, 2020.

\bibitem{SchootUiterkamp2019a}
M.~H.~H. {Schoot Uiterkamp}, M.~E.~T. Gerards, and J.~L. Hurink.
\newblock {Quadratic nonseparable resource allocation problems with generalized
  bound constraints}, 2020.
\newblock arXiv: \url{https://arxiv.org/abs/2007.06280}.

\bibitem{SchootUiterkamp2018b}
M.~H.~H. {Schoot Uiterkamp}, T.~van~der Klauw, M.~E.~T. Gerards, and J.~L.
  Hurink.
\newblock {Offline and online scheduling of electric vehicle charging with a
  minimum charging threshold}.
\newblock In {\em 2018 IEEE International Conference on Communications,
  Control, and Computing Technologies for Smart Grids}, Aalborg, 2018.

\bibitem{Shams2014}
F.~Shams, G.~Bacci, and M.~Luise.
\newblock {A survey on resource allocation techniques in OFDM(A) networks}.
\newblock {\em Comput. Netw.}, 65:129--150, 2014.

\bibitem{Sharkey2011}
T.~C. Sharkey, H.~E. Romeijn, and J.~Geunes.
\newblock {A class of nonlinear nonseparable continuous knapsack and
  multiple-choice knapsack problems}.
\newblock {\em Math. Program.}, 126(1):69--96, 2011.

\bibitem{Shioura2017}
A.~Shioura, N.~V. Shakhlevich, and V.~A. Strusevich.
\newblock {Machine speed scaling by adapting methods for convex optimization
  with submodular constraints}.
\newblock {\em INFORMS J. Comput.}, 29(4):724--736, 2017.

\bibitem{Shioura2018}
A.~Shioura, N.~V. Shakhlevich, and V.~A. Strusevich.
\newblock {Preemptive models of scheduling with controllable processing times
  and of scheduling with imprecise computation: A review of solution
  approaches}.
\newblock {\em Eur. J. Oper. Res.}, 266(3):795--818, 2018.

\bibitem{Slager2019}
J.~Slager.
\newblock {\em {Nonlinear convex optimisation problems in the smart grid}}.
\newblock B.sc. thesis, University of Twente, 2019.

\bibitem{Sun2013}
X.~Sun, X.~Zheng, and D.~Li.
\newblock {Recent advances in mathematical programming with semi-continuous
  variables and cardinality constraint}.
\newblock {\em J. Oper. Res. Soc. China}, 1(1):55--77, 2013.

\bibitem{Tamir1993}
A.~Tamir.
\newblock {A strongly polynomial algorithm for minimum convex separable
  quadratic cost flow problems on two-terminal series—parallel networks}.
\newblock {\em Math. Program.}, 59(1):117--132, 1993.

\bibitem{Uddin2018}
M.~Uddin, M.~F. Romlie, M.~F. Abdullah, S.~{Abd Halim}, A.~H. {Abu Bakar}, and
  T.~{Chia Kwang}.
\newblock {A review on peak load shaving strategies}.
\newblock {\em Renew. Sustain. Energy Rev.}, 82:3323--3332, 2018.

\bibitem{vdKlauw2017}
T.~van~der Klauw, M.~E.~T. Gerards, and J.~L. Hurink.
\newblock {Resource allocation problems in decentralized energy management}.
\newblock {\em OR Spectr.}, 39(3):749--773, 2017.

\bibitem{Vanhoudt2014}
D.~Vanhoudt, D.~Geysen, B.~Claessens, F.~Leemans, L.~Jespers, and J.~{Van
  Bael}.
\newblock {An actively controlled residential heat pump: Potential on peak
  shaving and maximization of self-consumption of renewable energy}.
\newblock {\em Renew. Energy}, 63:531--543, 2014.

\bibitem{Veinott1971}
A.~F. Veinott.
\newblock {Least d-majorized network flows with inventory and statistical
  applications}.
\newblock {\em Manag. Sci.}, 17(9):547--567, 1971.

\bibitem{Vidal2018}
T.~Vidal, D.~Gribel, and P.~Jaillet.
\newblock {Separable convex optimization with nested lower and upper
  constraints}.
\newblock {\em INFORMS J. Optim.}, 1(1):71--90, 2019.

\bibitem{Vidal2016}
T.~Vidal, P.~Jaillet, and N.~Maculan.
\newblock {A decomposition algorithm for nested resource allocation problems}.
\newblock {\em SIAM J. Optim.}, 26(2):1322--1340, 2016.

\bibitem{Wolfe1976}
P.~Wolfe.
\newblock {Finding the nearest point in a polytope}.
\newblock {\em Math. Program.}, 11(1):128--149, 1976.

\bibitem{Wright2020}
S.~E. Wright and S.~Lim.
\newblock {Solving nested-constraint resource allocation problems with an
  interior point method}.
\newblock {\em Oper. Res. Lett.}, 48(3):297--303, 2020.

\bibitem{Wright2014}
S.~E. Wright and J.~J. Rohal.
\newblock {Solving the continuous nonlinear resource allocation problem with an
  interior point method}.
\newblock {\em Oper. Res. Lett.}, 42(6):404--408, 2014.

\bibitem{Wu2019}
Z.~Wu.
\newblock {\em {Fast exact algorithms for optimization problems in resource
  allocation and switched linear systems}}.
\newblock PhD thesis, University of Minesota, 2019.

\bibitem{Xing2020}
C.~Xing, Y.~Jing, S.~Wang, S.~Ma, and H.~V. Poor.
\newblock {New viewpoint and algorithms for water-filling solutions in wireless
  communications}.
\newblock {\em IEEE Trans. Signal Process.}, 68:1618--1634, 2020.

\bibitem{Zame2018}
K.~K. Zame, C.~A. Brehm, A.~T. Nitica, C.~L. Richard, and G.~D. {Schweitzer
  III}.
\newblock {Smart grid and energy storage: Policy recommendations}.
\newblock {\em Renew. Sustain. Energy Rev.}, 82:1646--1654, 2018.

\bibitem{Zhuravlev2013}
S.~Zhuravlev, J.~C. Saez, S.~Blagodurov, A.~Fedorova, and M.~Prieto.
\newblock {Survey of energy-cognizant scheduling techniques}.
\newblock {\em IEEE Trans. Parallel Distrib. Syst.}, 24(7):1447--1464, 2013.

\end{thebibliography}

%\end{multicols}
\end{document}